\def\F{{\mathbb F}}
\def\A{\mathcal A}
\def\B{\mathcal B}
\def\L{\mathcal L}
\def\SS{\mathcal S}
\def\dim{{\rm dim}\,}
\def\R{{\mathbb R}}
\newtheorem{theorem}{Theorem}[section] 
\newtheorem{lemma}[theorem]{Lemma}
\newtheorem{corollary}[theorem]{Corollary}
\newtheorem{proposition}[theorem]{Proposition}
\theoremstyle{definition}
\newtheorem{definition}[theorem]{Definition}
\newtheorem{remark}[theorem]{Remark}
\newtheorem{example}[theorem]{Example}
\title{Algebras of slowly growing length \footnote{The work was financially supported by the grant  RSF 17-11-01124.}}
\author{A. Guterman, D. Kudryavtsev}
\date{}
\newcommand{\Addresses}{
	\bigskip
	\footnotesize
	
	\textsc{Alexander Guterman, }{Faculty of Algebra, Department of
		Mechanics and Mathematics, Lomonosov Moscow State University, Moscow
		119991, Russia; Moscow Center for Fundamental and Applied Mathematics,  Moscow,   119991, Russia; Moscow Center for Continuous Mathematical Education,  Moscow, 119002, Russia
	}\par\nopagebreak
	\textit{E-mail address}:   \texttt{guterman@list.ru}
	
	\medskip

	\textsc{Dmitry Kudryavtsev, }{School of Mathematics, University of Manchester, Manchester M13 9PL, UK; Moscow Center for Fundamental and Applied Mathematics,  Moscow, 119002, Russia}\par\nopagebreak
	\textit{E-mail address}:   \texttt{dmitry.kudryavtsev@postgrad.manchester.ac.uk}
}
\begin{document}
\maketitle

\Addresses

\begin{abstract}
We investigate the class of finite dimensional not necessary associative algebras that have slowly growing length, that is, for any algebra  in this class its length is less than or equal to its dimension. We show that this class is considerably big, in particular, finite dimensional Lie algebras as well as many other important classical finite dimensional algebras belong to this class, for example, Leibniz algebras, Novikov algebras, and Zinbiel algebras. An exact upper bounds for the length of these algebras is proved. To do this we transfer the method of characteristic sequences to non-unital algebras and find certain  polynomial conditions on the algebra elements that guarantee the slow growth of the length function. 

MSC: 15A03,17A99,15A78
\end{abstract}

{\em Keywords}: length of algebras, non-associative algebras, Lie algebras.

\section{Introduction}

Let $\F$ be an arbitrary field. In this paper $\A$ always denotes a finite dimensional not necessarily unital not necessarily associative $\F$-algebra with the operation $(\cdot)$ usually denoted by the concatenation. Let $\SS=\{a_1,\ldots,a_k\}$ be a finite generating set of $\A$. Any product of a finite number of elements from $\SS$ is a {\em word} in $\SS$. The {\em length} of the word $w$, denoted $l(w)$, equals to the  number of letters in the corresponding product. If $\A$ is unital, we consider $1$ as a word in $\SS$ with the {\em length $0$}.
It is worth noting that different choices of brackets provide different words of the same length due to the non-associativity of $\A$.

The set of all words  in $\SS$ with the lengths less than or equal to $i$ is denoted by $\SS^i$, here $i\ge 0$.

Note that similarly to the associative case, $m<n$ implies that $\SS^m \subseteq \SS^n$.

The set $\L_i(\SS) = \langle \SS^i \rangle$  is the linear span  of  the set  $\SS^i$
(the set of all finite linear combinations with coefficients belonging to
$\mathbb{F}$). We write $\L_i$ instead of $\L_i(\SS)$ if $\SS$ is clear from the context. It should be noted that for unital algebras $\L_0(\SS)=\langle
1 \rangle=\mathbb{F}$ for any $\SS$, and for non-unital algebras $\L_0 = \emptyset$.
We denote  $\L(\SS) =\bigcup\limits_{i=0}^\infty \L_i(\SS)$. 

Since the set $\SS$ is generating for
$\A$, we have $\A=\L(\SS)$.

\begin{definition}\label{sys_len} 
	The	{\em length of  a generating set} $\SS$ of a finite-dimensional algebra $\A$ is defined as follows: $l(\SS)=\min\{k\in \mathbb{Z}_+:\L_k(\SS)=\A\}.$ 
\end{definition}

\begin{definition}\label{alg_len} The
	{\em length of an algebra $\A$} is   $l(\A)=\max \{l(\SS): \L(\SS)=\A\}$. 
\end{definition}

The problem of the associative algebra length computation was first discussed in \cite{SpeR59, SpeR60} for the algebra of $3\times3$ matrices in the context of the mechanics of isotropic continua.

It is straightforward to see that the length of an associative algebra is strictly less than its dimension, and this bound is sharp. Namely, one-generated associative algebra of the dimension $d$ has the length $d-1$. The first non-trivial result in this direction is going back to Paz \cite{Paz84}. More results on abstract associative algebras can be found, for example, in \cite{LafMSh,Mar09,Pap97}.
However, in general most of the known results on the length function are just bounds that are not sharp. 
Even the sharp upper bound for the length of the matrix algebra is not known, see~\cite{Mar09}. However, a great deal of work has been done investigating the related notion of length for given generating sets of matrices, see~\cite{GutLMSh,Long1,Long2} and references therein.

Recent results on the lengths of non-associative algebras were obtained in the works   \cite{GutK18,GutK19}. In particular   a strict upper bound on the length of a general non-associative unital algebra is provided. 

\begin{theorem}[{\cite[Theorem 2.7]{GutK18}}]
	Let $\A$ be a unital $\F$-algebra,  $\dim \A = n \ge 2$. Then $l( \A)\le 2^{n-2}$.
\end{theorem}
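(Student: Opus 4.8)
The plan is to fix an arbitrary generating set $\SS$ of $\A$ and prove $l(\SS)\le 2^{n-2}$; since $l(\A)=\max\{l(\SS):\L(\SS)=\A\}$ this suffices. The engine of the argument is the subspace operator $\Phi(V)=V+V\cdot V$, where $V\cdot V$ denotes the linear span of all products $vw$ with $v,w\in V$, iterated starting from $V=\L_1$.

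First I would establish $\Phi^{k}(\L_1)\subseteq\L_{2^k}$ for every $k\ge 0$. The ingredients are: $\Phi$ is monotone, i.e.\ $V\subseteq W$ implies $\Phi(V)\subseteq\Phi(W)$; and $\L_i\cdot\L_i\subseteq\L_{2i}$, because the product of a word of length $\le i$ by a word of length $\le i$ is a word of length $\le 2i$, whence $\Phi(\L_i)\subseteq\L_{2i}$. Since $\Phi(\L_1)=\L_2$, an easy induction using monotonicity together with $\Phi(\L_{2^{k}})\subseteq\L_{2^{k+1}}$ gives the claim.

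Next I would run a dimension count along the increasing chain $\L_1=\Phi^{0}(\L_1)\subseteq\Phi^{1}(\L_1)\subseteq\cdots\subseteq\A$. By construction $\Phi(V)=V$ exactly when $V\cdot V\subseteq V$, i.e.\ when $V$ is a subalgebra; and every $\Phi^{k}(\L_1)$ contains $1$ and $\SS$, so if some $\Phi^{k}(\L_1)$ were a subalgebra it would contain the subalgebra generated by $\{1\}\cup\SS$, which is $\A$. Hence, as long as $\Phi^{k}(\L_1)\ne\A$, the term $\Phi^{k+1}(\L_1)$ is strictly larger, so its dimension grows by at least $1$. Because $\SS$ generates $\A$ and $n\ge 2$, not all generators lie in $\F\cdot 1$, so $\dim\L_1\ge 2$; thus the dimensions satisfy $2\le\dim\L_1<\dim\Phi(\L_1)<\cdots$ and must reach $n$, forcing $\Phi^{m}(\L_1)=\A$ for some $m\le n-2$. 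Combining with the first step, $\A=\Phi^{m}(\L_1)\subseteq\L_{2^m}\subseteq\L_{2^{n-2}}$, so $l(\SS)\le 2^{n-2}$, and therefore $l(\A)\le 2^{n-2}$.

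The point where care is really needed — and the reason the bound is exponential rather than the linear $n-1$ valid in the associative case — is that the filtration $\L_i$ itself need not stabilize at its first repetition: $\L_m=\L_{m+1}$ does not make $\L_m$ a subalgebra, because non-associativity forbids rewriting $(uv)w$ as $u(vw)$, so a product of two long elements of $\L_m$ may leave $\L_m$. Passing through $\Phi$ to the subsequence $\{\L_{2^k}\}$ fixes this, since $\Phi^{k}(\L_1)$ does stabilize precisely at a subalgebra; the price is that a single application of $\Phi$ can only be guaranteed to land in $\L_{2i}$ rather than in $\L_{i+1}$, and this is exactly what introduces the factor $2^{n-2}$. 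The only remaining routine work is the induction $\Phi^{k}(\L_1)\subseteq\L_{2^k}$ and the degenerate cases $n=2$ (where already $\L_1=\A$) and $n=3$.
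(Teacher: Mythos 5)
Your proof is correct: the inclusion $\Phi^{k}(\L_1)\subseteq\L_{2^k}$, the observation that $\Phi$ fixes a subspace only when it is a subalgebra containing $1$ and $\SS$ (hence all of $\A$), and the dimension count starting from $\dim\L_1\ge 2$ fit together without gaps, including the cases $n=2,3$. The paper only cites this theorem from \cite{GutK18} without reproving it, and your argument is essentially the same doubling argument used there (if $\L_{2m}(\SS)=\L_m(\SS)$ then $\L_m(\SS)$ is a subalgebra equal to $\A$, so the dimensions along $\L_1,\L_2,\L_4,\dots$ strictly increase), merely repackaged through the operator $\Phi$.
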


To prove this and several other results  the  method of characteristic sequences
was
introduced, see  \cite{GutK18,GutK19}.

\begin{definition} \label{CharSeqUn} \cite[Definition 3.1]{GutK18}
	Consider a unital $\F$-algebra $\A$   of the  dimension $\dim \A = n$,
	and its generating set $\SS$. By the {\em characteristic sequence} of $\SS$
	in $\A$ we understand a monotonically non-decreasing sequence of  non-negative integers $(f_1, f_2,\ldots, f_N)$, constructed by the following rules:
	\begin{enumerate}
		\item $f_1 = 0$.
		\item Denoting $s_1=\dim \L_1(\SS)$, we define $f_2=\ldots =f_{s_1}=1$.
		\item Let for some $r>0$, $k>1$ the elements $f_1,\ldots, f_r$ be already defined and the sets $ \L_1(\SS) ,\ldots , \L_{k-1}(\SS) $ are considered. Then we inductively continue the process in the following way. Denote $s_k=\dim \L_k(\SS) - \dim \L_{k-1}(\SS)$. We define $f_{r+1} =\ldots =f_{r+s_k}=k$.  
	\end{enumerate}
\end{definition}

It is proved in \cite[Lemma 3.5]{GutK18} that $N=\dim \A$ and $f_N=l(\SS)$.

The main focus of this paper is the algebras with slowly growing length.

\begin{definition}
We say that a class of algebras has {\em slowly growing length}, if for any representative  $\A$ of this class it holds that $l(\A) \le \dim (\A) $.
\end{definition}

For example, associative algebras are of this type since the sequence $\L_k(\SS)$
growths strictly monotone with $k$, and hence can not have more than  $\dim (\A) -1 $ elements.

The main purpose of our paper is to show that the class of algebras with slowly growing length is rather big, namely, a number of important classes of non-associative finite dimensional algebras have slowly growing length, in particular, Lie algebras and  more general classes such as  Leibniz algebras, Novikov algebras, and Zinbiel algebras are of this type. To proceed we generalized the notion of characteristic sequences introduced in \cite{GutK18} to non-unital algebras. It is straightforward to see that if the characteristic sequence $(m_1,\ldots,m_d)$ of an algebra satisfies the condition $m_{j+1}-m_j \le 1 $ for all $j=1,\ldots, d-1$ then the length of this algebra is slowly growing. We find two combinatorial properties for algebras which ensure the aforesaid condition for the characteristic sequences to be fulfilled.  We call the corresponding algebras {\em sliding} and {\em mixing} due to the nature of these combinatorial properties, and investigate their interrelations. After that we examine these combinatorial properties for the major classes of algebras. We prove that associative and Lie algebras, and moreover, Leibniz algebras, are both sliding and mixing. Novikov algebras are mixing, but in general they are not sliding. Zinbiel algebras are sliding, but in general they are not mixing. However, there are algebras that are  neither sliding nor mixing, but have slowly growing length. We provide an example of such algebras. Finally we discuss  algebras that are neither sliding nor mixing and in general are not algebras with slowly growing length. In particular, Valya and Vinberg
algebras are among them. 
 
Our paper is organized as follows. In Section 2 we transfer the method of characteristic sequences to non-unital algebras. In Section 3 polynomial properties which guarantee a slow growth of length are  introduced and length of corresponding algebras is estimated by means of the characteristic sequences. In Section 4 we examine which important classes of non-associative algebras have slowly growing length.

\section{Characteristic sequences for non-unital algebras}

We begin with several definitions and auxiliary results, inherited from the unital case.
Let $\A$ be an $\F$-algebra of the dimension $\dim \A = n$, $n>2$, and $\SS$ be
a generating set for $\A$. The algebra $\A$ can be either unital or non-unital.

\begin{definition}\label{Def_Irr}
	A word $w$  from a generating set $\SS$ of an algebra $\A$ is   {\em irreducible}, if for each integer $m,\ 0 \leq m<l(w),$ it holds  that $w \notin L_m(\SS)$.
\end{definition}

\begin{lemma}\label{lem_1} \cite[Lemma 2.14]{GutK18}
	Any irreducible word $w$, $l(w)>1$,  is a product of two irreducible words of non-zero lengths.
\end{lemma}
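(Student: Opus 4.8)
The plan is to argue by strong induction on $l(w)$. The base case $l(w)=2$ is immediate: a word of length $2$ is a product of two generators, each of which is a word of length $1$, and a single generator $a_i$ is irreducible (if it were in $L_0(\SS)$ it would be $0$ in the non-unital case, and in the unital case $L_0 = \F$, so $a_i \in L_0$ would make it a scalar — we should note that for the purposes of the length theory one may assume generators lie outside $L_0$, or handle that degenerate case separately). So assume $l(w) = d > 2$ and that the statement holds for all irreducible words of length strictly between $1$ and $d$.

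Since $w$ is a word of length $d \geq 2$, by definition of a word it factors as $w = uv$ where $u, v$ are words in $\SS$ with $l(u), l(v) \geq 1$ and $l(u) + l(v) = d$. The key step is to show that we can choose this factorization so that \emph{both} $u$ and $v$ are irreducible. Suppose, for a fixed factorization $w = uv$, that $u$ is reducible, i.e.\ $u \in L_m(\SS)$ for some $m < l(u)$; take $m$ minimal, so $u \in L_m(\SS) \setminus L_{m-1}(\SS)$ and we may write $u = \sum_j \lambda_j u_j$ with each $u_j$ a word of length $\leq m < l(u)$. Then $w = uv = \sum_j \lambda_j (u_j v)$, and each $u_j v$ is a word of length $l(u_j) + l(v) \leq m + l(v) < l(u) + l(v) = d$. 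Hence $w \in L_{d-1}(\SS)$, contradicting irreducibility of $w$. The same argument applies if $v$ is reducible. Therefore in \emph{every} factorization $w = uv$ into words of positive length, both factors are irreducible; in particular such a factorization exists and consists of two irreducible words of non-zero length, which is exactly the claim.

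The main obstacle — really the only subtle point — is making sure the degenerate interaction with $L_0(\SS)$ does not break the argument, since in the unital case $L_0 = \F \neq \emptyset$ and a length-$0$ word (the unit) could in principle appear. But irreducibility of $w$ with $l(w) > 1$ already forces $w \notin L_1(\SS) \supseteq L_0(\SS)$, and when we split $w = uv$ we only ever produce factors of length $\geq 1$ by definition of a word; the reduction argument above then only ever replaces a factor by shorter words, never by the unit, so no length-$0$ words are introduced. (Alternatively, this is precisely the content of \cite[Lemma 2.14]{GutK18} cited here, so one may simply invoke it, the above being the transcription of its proof to the possibly non-unital setting.) Once $u$ and $v$ are known to be irreducible, there is nothing further to check.
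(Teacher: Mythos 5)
Your proof is correct. Note that the paper does not reproduce a proof of this lemma at all --- it simply cites \cite[Lemma 2.14]{GutK18} --- so there is nothing internal to compare against; your argument (the outermost multiplication gives a factorization $w=uv$ into words of positive length, and if a factor were reducible, bilinearity would express $w$ as a linear combination of strictly shorter words, placing $w$ in $\L_{d-1}(\SS)$ and contradicting irreducibility) is the standard and correct one, and it handles the unital degeneracy exactly as you say. One small remark: the strong induction you announce is never used --- your second paragraph is already a complete direct proof, and it also subsumes the base case $l(w)=2$, since a reducible length-one factor (a scalar multiple of $1$) would likewise force $w$ into $\L_{d-1}(\SS)$. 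You could delete the induction scaffolding without loss.
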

 
To work with algebras which are not necessarily unital, we need to generalize Definition \ref{CharSeqUn} for non-unital case. 

\begin{definition} \label{CharSeqB}
By the {\em characteristic sequence} of $\SS$
	in $\A$ we understand a monotonically non-decreasing sequence of  non-negative integers $(m_1,\ldots, m_N)$, constructed by the following rules:
	\begin{enumerate}
		\item If $s_0 = \dim L_0(\SS) =1$, we set $m_1=0$. Otherwise $s_0=0$.
		\item Denoting $s_1=\dim \L_1(\SS) - \dim L_0(\SS) $, we define $m_{s_0+1}=\ldots =m_{s_0+s_1}=1$.
		\item Let for some $r>0$, $k>1$ the elements $m_1,\ldots, m_r$ be already defined and the sets $ \L_0(\SS) ,\ldots , \L_{k-1}(\SS) $ be considered. Then we inductively continue the process in the following way. Denote $s_k=\dim \L_k(\SS) - \dim \L_{k-1}(\SS)$. We define $m_{r+1} =\ldots =m_{r+s_k}=k$.  
	\end{enumerate}
\end{definition}

For a unital algebra we have in respective notations $f_i = m_{i+1}$. The main difference between these sequences is that for non-unital algebra the characteristic sequence starts with~1, while in the unital case it starts with~0.

\begin{lemma} \label{fr_char} Consider a generating set $\SS$ of an algebra $\A$. There exists a finite series of sets $E_1, \ldots, E_{l(\SS)}$, satisfying the following properties:

1. $E_h \subset E_{h+1}$, $h=1,\ldots,l(\SS)-1$

2. $E_h$ is a basis of $L_h(\SS)$.

3. $E_h$ consists of irreducible words in $\SS$ of lengths $0,\ldots,h$, with exactly $s_j = \dim \L_j(\SS) - \dim \L_{j-1}(\SS)$ words of length $j$ for $j=1,\ldots,h$ and $s_0$ words of length $0$, where $s_0$ is 1 for unital algebra and 0 otherwise.

\end{lemma}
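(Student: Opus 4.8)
The plan is to construct the sets $E_h$ by induction on $h$, following closely the skeleton of the characteristic sequence construction in Definition \ref{CharSeqB}. For the base of the induction, I would start at the smallest relevant level: if $\A$ is unital, set $E_0 = \{1\}$, which is a basis of $L_0(\SS) = \F$ consisting of one word of length $0$; if $\A$ is non-unital, $L_0(\SS) = \emptyset$ and the process starts at $h=1$. At level $1$, enlarge $E_0$ (or start from scratch) by adjoining exactly $s_1 = \dim \L_1(\SS) - \dim L_0(\SS)$ words of length $1$, i.e. elements of $\SS$, chosen so that together with $E_0$ they form a basis of $\L_1(\SS)$; this is possible because $\L_1(\SS)$ is spanned by $L_0(\SS)$ together with the length-$1$ words $a_1,\dots,a_k$, so a basis extension of $E_0$ inside $\L_1(\SS)$ can be taken from among these generators. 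Words of length $1$ are automatically irreducible (a length-$1$ word $a_i$ lies in $L_0(\SS)$ only in the unital case, and in that case we would simply not have needed to add it, adjusting $s_1$ accordingly; more carefully, the count $s_1$ already records exactly how many are genuinely new).

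For the inductive step, suppose $E_{h-1}$ has been constructed as a basis of $\L_{h-1}(\SS)$ consisting of irreducible words of lengths $\le h-1$ with the prescribed multiplicities $s_0,\dots,s_{h-1}$. We have $\dim \L_h(\SS) - \dim \L_{h-1}(\SS) = s_h$, so we need to adjoin $s_h$ new basis vectors. The key point is that these new vectors can be chosen to be \emph{irreducible words of length exactly $h$}. Indeed, $\L_h(\SS)$ is spanned by $\L_{h-1}(\SS)$ together with all words of length $\le h$; modulo $\L_{h-1}(\SS)$ only the words of length exactly $h$ can contribute, so we may pick $s_h$ words $w_1,\dots,w_{s_h}$ of length $h$ whose images in $\L_h(\SS)/\L_{h-1}(\SS)$ form a basis, and set $E_h = E_{h-1} \cup \{w_1,\dots,w_{s_h}\}$. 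Each such $w_j$ is irreducible: if $w_j \in L_m(\SS)$ for some $m < h = l(w_j)$, then $w_j \in \L_{h-1}(\SS)$, contradicting the linear independence of its image modulo $\L_{h-1}(\SS)$. Property 1 ($E_{h-1} \subset E_h$) and property 2 ($E_h$ a basis of $\L_h(\SS)$) are then immediate, and property 3 holds by construction with the multiplicity of length-$j$ words being exactly $s_j$.

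The main obstacle — really the only subtle point — is ensuring that when we pass from $E_{h-1}$ to $E_h$ we genuinely \emph{can} complete to a basis using words of length exactly $h$ rather than being forced to use something of smaller length or a non-word linear combination. This is handled by the observation above: $\L_h(\SS) = \L_{h-1}(\SS) + \langle\,\{w : l(w) = h\}\,\rangle$, so the quotient $\L_h(\SS)/\L_{h-1}(\SS)$ is spanned by (images of) length-$h$ words, and any spanning set of a finite-dimensional space contains a basis. One should also note that the index $r$ appearing in Definition \ref{CharSeqB} satisfies $r = s_0 + s_1 + \cdots + s_{h-1} = \dim \L_{h-1}(\SS) = |E_{h-1}|$ at each stage, so the cardinalities match the characteristic sequence exactly, and the process terminates at $h = l(\SS)$ with $E_{l(\SS)}$ a basis of $\L_{l(\SS)}(\SS) = \A$; the total count is $\sum_{j} s_j = \dim \A = N$, consistent with the earlier remark that $N = \dim \A$. (Lemma \ref{lem_1} is not needed for this statement, though it becomes relevant when one later wants the words in $E_h$ to be products of shorter irreducible words.)
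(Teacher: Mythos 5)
Your proposal is correct and follows essentially the same route as the paper: inductively extend a basis of $\L_{h-1}(\SS)$ to one of $\L_h(\SS)$ by adjoining $s_h$ words of length exactly $h$. The only cosmetic difference is that the paper greedily selects from the (already irreducible) length-$k$ words spanning $\L_k(\SS)$ modulo $\L_{k-1}(\SS)$, whereas you select arbitrary length-$h$ words with independent images in the quotient and then deduce their irreducibility from that independence — both arguments are sound and yield the same sets.
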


\begin{proof}
	We will construct $E_h$ sequentially using induction on $h$.
	
	The base: $h=1$. Assume that $\A$ is unital. Then  we choose the basis $E_1$ as $\{1\} \cup \SS_0$, where $\SS_0$ is the maximal subset of $\SS$, linearly independent modulo~$\F$. 	
	If $\A$ is non-unital, then we choose the basis $E_1$ as the maximal linearly independent subset of $\SS$. In both cases there are exactly $s_0$ irreducible words of length $0$ and $s_1$ words of length $1$ in $E_1$.

	The step. Assume we have constructed $E_h$ for all $h\le k-1$, $2\le k \le l(\SS)$.
	
	By Definition \ref{Def_Irr} $\L_k(\SS)$ is the linear span of all irreducible words of length less than or equal to~$k$. 	We will construct $E_k$ expanding $E_{k-1}$ by a set $E$ using the following algorithm:
	
	\begin{itemize} 
		\item Let $\{w_1,\ldots,w_r\}$ be the set of all irreducible words of length  $k$. It is   finite since the number of   words of length $k$ is finite. Set $t=1$, $E=\emptyset$.
		\item If $w_t \in \langle E \cup E_{k-1} \rangle$, increase $t$ by 1. Otherwise, expand $E$ with $w_t$, making it $E \cup \{w_t\}$, and increase $t$ by 1.
		\item If $t<r$, return to step 2. If $t=r$, end the algorithm.
	\end{itemize}

	We will show that $E_k = E_{k-1} \cup E$ is the desired set. Note that $E \cap E_{k-1} = \emptyset$.
	
	1. $E_k = E_{k-1} \cup E \supset E_{k-1}$.

	2. $E_k$ is a basis of $\L_k(\SS)$. Firstly, it is linearly independent by construction. Secondly, $\langle E_k  \rangle =\L_k(\SS)$, since every irreducible word of length $l\le k-1$ lies in $\L_{k-1}(\SS) = \langle E_{k-1} \rangle \subset \langle E \cup E_{k-1} \rangle$, and  by construction every irreducible word of length $k$ is in $\langle E \cup E_{k-1} \rangle$. Additionally, $E_k$ being a basis of $\L_k(\SS)$ means that $|E|=s_k$ due to $\dim \L_k(\SS) = \dim \langle E_k  \rangle = |E_{k}| = |E| + |E_{k-1}| = |E| + \dim \langle E_{k-1}  \rangle = |E| + \dim \L_{k-1}(\SS)$.

	3. Since $E$ consists only of irreducible words of length $k$ by construction, $|E|=s_k$ as noted above, and $E_{k-1}$ consists of irreducible words of lengths $0,\ldots,k-1$, with exactly $s_j$ words of length $j$ for $j=0,\ldots,k-1$, we have $E_k$ being composed of  irreducible words of lengths $0,\ldots,k$, with exactly $s_j$ words of length $j$ for $j=0,\ldots,k$.

\end{proof}

The following statements provide the analogs of \cite[Lemma 3.4]{GutK18} in non-unital case.
\begin{corollary} \label{cor_core}
	\begin{enumerate}
		\item For any term $m_h$ of the characteristic sequence of $\SS$ there is an irreducible word in $\L(\SS)$ of the length $m_h$.
		\item If there is an irreducible word in letters from $\SS$ of the length $k$, then $k$ is included into the characteristic sequence of~$\SS$.
	\end{enumerate}
\end{corollary}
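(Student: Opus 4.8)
The plan is to derive both items directly from Lemma \ref{fr_char} and the bookkeeping built into Definition \ref{CharSeqB}. Recall from that definition that in the characteristic sequence $(m_1,\ldots,m_N)$ the value $k$ is assigned to exactly $s_k$ consecutive positions, where $s_0=\dim \L_0(\SS)$ (equal to $1$ in the unital case and $0$ otherwise) and $s_k=\dim \L_k(\SS)-\dim \L_{k-1}(\SS)$ for $k\ge 1$; in particular a given integer $k$ appears in the sequence if and only if $s_k\ge 1$. Also note that every irreducible word has length at most $l(\SS)$, since a word of length exceeding $l(\SS)$ already lies in $\L_{l(\SS)}(\SS)=\A$, contradicting irreducibility; hence the chain $E_1\subset\cdots\subset E_{l(\SS)}$ furnished by Lemma \ref{fr_char} sees all relevant lengths.

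For the first item, suppose $m_h=k$. Then $s_k\ge 1$ by the remark above, so by property 3 of Lemma \ref{fr_char} the basis $E_{l(\SS)}$ of $\A=\L(\SS)$ contains exactly $s_k\ge 1$ irreducible words of length $k$ (for $k=0$, which only occurs in the unital case, this word is $1\in\L_0(\SS)$, irreducible of length $0$ because the defining condition in Definition \ref{Def_Irr} is vacuous). Any such word is an irreducible word of length $m_h$ in $\L(\SS)$.

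For the second item, let $w$ be an irreducible word in $\SS$ of length $k$. If $k\ge 1$, then $w\in\SS^k\subseteq\L_k(\SS)$, whereas irreducibility gives $w\notin\L_{k-1}(\SS)$; hence $\L_{k-1}(\SS)\subsetneq\L_k(\SS)$ and $s_k\ge 1$, so $k$ occurs in the characteristic sequence by rule 2 (if $k=1$) or rule 3 (if $k\ge 2$) of Definition \ref{CharSeqB}. If $k=0$, the algebra is unital, $w=1$, $s_0=1$, and $m_1=0$ by rule 1.

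Since the argument is essentially bookkeeping, I do not anticipate a serious obstacle; the only care needed is in the boundary value $k=0$, which forces the split between unital and non-unital algebras, and in the preliminary observation that irreducible words cannot be longer than $l(\SS)$, so that Lemma \ref{fr_char} — which produces only $E_1,\ldots,E_{l(\SS)}$ — suffices.
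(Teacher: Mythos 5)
Your proof is correct and follows essentially the same route as the paper: both items are read off from the basis $E_{l(\SS)}$ of Lemma \ref{fr_char} together with the observation that $k$ appears in the characteristic sequence if and only if $s_k=\dim\L_k(\SS)-\dim\L_{k-1}(\SS)>0$. Your write-up is in fact slightly more careful than the paper's, spelling out why an irreducible word of length $k$ forces $s_k\ge 1$ and treating the boundary case $k=0$ explicitly.
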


\begin{proof}
	Consider set $\mathcal{E}=E_{l(\SS)}$, constructed by Lemma \ref{fr_char}.

	 Item 1. If $m_h$ belongs to the characteristic sequence of $\SS$, then $s_{m_h} \neq 0$ and there is at least one irreducible word of length $m_h$ in the set $\mathcal{E}$.

	Item 2. The existence of an irreducible word of length $k$ guarantees that $s_k= \dim \L_k(\SS) - \dim \L_{k-1}(\SS) > 0$, which means that $k$ is included in the characteristic sequence by definition.
\end{proof}

\begin{lemma} \label{N=n}
The characteristic sequence of $\SS$ contains exactly $\dim \A$ terms. Moreover, for the last term we have $m_N=l(\SS)$.
\end{lemma}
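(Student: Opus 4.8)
The plan is to read off both claims directly from the construction of the characteristic sequence together with Lemma \ref{fr_char}, which produces a nested chain $E_1\subset\cdots\subset E_{l(\SS)}$ of bases for the spaces $\L_1(\SS)\subset\cdots\subset\L_{l(\SS)}(\SS)=\A$. First I would pin down the number of terms. By construction the characteristic sequence consists of $s_0$ copies of $0$ (with $s_0\in\{0,1\}$ depending on unitality), then $s_1$ copies of $1$, then $s_2$ copies of $2$, and so on, where $s_k=\dim\L_k(\SS)-\dim\L_{k-1}(\SS)$ for $k\ge 1$. Hence the total number of terms is $N=s_0+\sum_{k=1}^{l(\SS)}s_k$. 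The sum telescopes: $\sum_{k=1}^{l(\SS)}s_k=\dim\L_{l(\SS)}(\SS)-\dim\L_0(\SS)=\dim\A-s_0$, using $\L_{l(\SS)}(\SS)=\A$ from Definition \ref{sys_len} and $\dim\L_0(\SS)=s_0$. Therefore $N=s_0+(\dim\A-s_0)=\dim\A$, which is the first assertion.

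Next I would identify the last term $m_N$. Since the sequence is monotonically non-decreasing and the value $k$ appears exactly $s_k$ times, the largest value occurring is the largest $k$ with $s_k>0$; call it $k^\ast$. I need $k^\ast=l(\SS)$. On one hand, $k^\ast\le l(\SS)$ trivially, because $s_k=\dim\L_k(\SS)-\dim\L_{k-1}(\SS)$ can only be nonzero while the chain $\L_k(\SS)$ is still strictly growing, and once $\L_k(\SS)=\A$ it stabilizes; more precisely the sequence is only defined up to index $l(\SS)$ by Definition \ref{CharSeqB} and Lemma \ref{fr_char}. On the other hand, $s_{l(\SS)}>0$: if we had $s_{l(\SS)}=0$, then $\dim\L_{l(\SS)}(\SS)=\dim\L_{l(\SS)-1}(\SS)$, and since $\L_{l(\SS)-1}(\SS)\subseteq\L_{l(\SS)}(\SS)$ this forces $\L_{l(\SS)-1}(\SS)=\L_{l(\SS)}(\SS)=\A$, contradicting the minimality in Definition \ref{sys_len} (here one also uses $l(\SS)\ge 1$, which holds since $\dim\A=n>2$ forces $\A\neq\L_0(\SS)$). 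Hence $k^\ast=l(\SS)$ and $m_N=l(\SS)$.

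A clean alternative for the last part, which I would actually prefer to present, is to invoke Lemma \ref{fr_char} directly: the basis $E_{l(\SS)}$ contains exactly $s_j$ irreducible words of length $j$, and since it is a basis of $\L_{l(\SS)}(\SS)=\A$ its cardinality is $\dim\A$, giving $N=\dim\A$ at once; moreover $E_{l(\SS)}\ne E_{l(\SS)-1}$ (otherwise $\L_{l(\SS)-1}(\SS)=\A$, contradicting minimality), so $s_{l(\SS)}>0$ and the word-length $l(\SS)$ genuinely appears, which is the maximal length present, hence equals $m_N$. Corollary \ref{cor_core} then records the two-way correspondence between terms of the sequence and lengths of irreducible words, so no length exceeding $l(\SS)$ can appear either.

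The main obstacle, such as it is, is bookkeeping: one must be careful with the $s_0$ term so that the unital and non-unital cases are handled uniformly (the telescoping works identically, since $\dim\L_0(\SS)=s_0$ in both cases), and one must make sure the definition of the characteristic sequence is read as terminating precisely when $\L_k(\SS)$ stabilizes — i.e., that the inductive rule in Definition \ref{CharSeqB} is applied for $k=2,\ldots,l(\SS)$ and no further — so that no spurious zero blocks $s_k$ are appended past index $l(\SS)$. Once that is fixed, both statements are immediate consequences of telescoping and the minimality of $l(\SS)$.
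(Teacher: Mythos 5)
Your proof is correct and follows essentially the same route as the paper: the count $N=\dim\A$ comes from the same telescoping of the $s_k$, and your identification of $m_N$ with $l(\SS)$ via ``$s_{l(\SS)}>0$ by minimality and $s_k=0$ for $k>l(\SS)$'' is just an unwound form of the paper's appeal to Corollary \ref{cor_core}, whose two items encode exactly the equivalence between $s_k>0$ and the existence of an irreducible word of length $k$.
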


\begin{proof}
	There are $s_0 + s_1 + \ldots + s_{l(\SS)}$ terms in the characteristic sequence since for $j > l(\SS)$ it holds that $s_j = \dim L_j(\SS) - \dim L_{j-1}(\SS) = \dim \A - \dim \A =0$. This sum can be rewritten as $\dim L_1 (\SS) +    (\dim L_2(\SS) - \dim L_{1}) + \ldots + (\dim L_{l(\SS)} - \dim L_{l(\SS)-1} = \dim L_{l(\SS)} = \dim \A$.

	By Corollary \ref{cor_core}, Item 2, there is an element of characteristic sequence of $\SS$ which is equal to $l(\SS)$. This implies $m_N \ge l(\SS)$ as the sequence is non-decreasing. However by Corollary \ref{cor_core}, Item 1 there exists an irreducible word of length $m_N$ which means $m_N \le l(\SS)$. Thus, $m_N=l(\SS)$.
\end{proof}

\section{Mixing and sliding algebras}
In this section we study two properties of multiplication which can guarantee a slow growth of length.

Let $x,y,z$ be variables. To introduce  the following definition we need the special  sets $Q_l$ and $Q_r$ of monomials:
$$Q_l(x,y,z) = \{ x   (z   y), x   (y   z), y   (x   z), y   (z   x), x  y, y  x, x  z, z  x, y  z, z  y,x ,y , z \},$$   
here we consider those monomials of degree three where $z$ is an argument of the first multiplication and the multiplier with $z$ is the second factor of the second multiplication,
$$Q_r(x,y,z) = \{ (x   z)   y, (z   x)   y, (y   z)   x, (z   y)   x, x  y, y  x, x  z, z  x, y  z, z  y,x ,y , z \},$$
here we consider those monomials of degree three where $z$ is an argument of the first multiplication and the multiplier with $z$ is the first factor of the second multiplication.

\begin{definition}\label{def_2}
	Let $\A$ be an $\F$-algebra  such that   at least one of the following statements holds:
	
	1.  $z   (x   y) \in \langle Q_r (x,y,z) \rangle$  for all $  x,y,z \in \A$, if $\A$ is non-unital; $z   (x   y) \in \langle Q_r (x,y,z), 1 \rangle$ for all $  x,y,z \in \A$, if $\A$ is unital.
	
	2.  $(x   y)   z \in \langle Q_l (x,y,z) \rangle$ for all $  x,y,z \in \A$, if $\A$ is non-unital; $(x   y)   z \in \langle Q_l (x,y,z), 1 \rangle$ for all $  x,y,z \in \A$, if $\A$ is unital.

Then we call $\A$ a {\em sliding} algebra. 
\end{definition}

To introduce the next class of algebras we need the monomial set:  $P(x,y,z)=$  $$= Q_l(x,y,z) \cup Q_r(x,y,z) = \left \{ \begin{matrix} (x   z)   y, (z   x)   y, (y z)   x, (z   y)   x,  x   (z   y), x   (y   z), y   (x   z), y   (z   x),\\ x  y,  y  x, x  z, z  x, y  z, z  y,x ,y , z \end{matrix} \right \}, $$
i.e. we consider those monomials of degree 3 that have $z$ inside the brackets.

\begin{definition}\label{def_1}
	Let $\A$ be an $\F$-algebra  such that for all $  x,y,z \in \A$ it holds that $(x   y)  z,\ z    (x   y)  \in  \langle P(x,y,z), 1 \rangle$ if $\A$ is unital, and  $(x   y)  z, \ z    (x   y)  \in  \langle P(x,y,z)  \rangle$ if $\A$ is non-unital.	
	Then we call $\A$ a {\em mixing} algebra. 
\end{definition}

\begin{remark}
	Associative algebras are both mixing and sliding.
\end{remark}

We are going to prove the main properties of mixing and sliding algebras that guarantee that these algebras have slowly growing length. 

Let $\widehat{P}(x,y,z)\subset P(x,y,z)$ be the subset of degree 3 monomials. For a mixing algebra $\A$ let 	
$T_l(x,y,z)  \subseteq \widehat{P}(x,y,z) $, respectively $T_r(x,y,z)  \subseteq \widehat{P}(x,y,z)$, be the set of monomials   that are included with    non-zero coefficients in  at least one of the representations of $(x   y)   z$, respectively  $z    (x   y)$,  as linear combinations of the elements of $P(x,y,z)\cup \{1\}$ or $P(x,y,z)$ in~$\A$.

\begin{lemma}\label{lem_smallstep}
Let $\A$ be  a mixing algebra,  $\SS$ be a generating set of $\A$, and   $M=(m_1,\ldots,m_d)$ be a characteristic sequence of $\SS$. Then   $m_{j+1}-m_j  \le 1$  for all $j=1,\ldots,d-1$.
\end{lemma}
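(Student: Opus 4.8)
The plan is to show that the gaps in the characteristic sequence are at most $1$ by reasoning about irreducible words. By Lemma~\ref{N=n} and Corollary~\ref{cor_core}, the values appearing in the characteristic sequence of $\SS$ are precisely the lengths $k$ for which there exists an irreducible word of length $k$ in $\SS$. Hence the claim $m_{j+1}-m_j \le 1$ is equivalent to the following statement: if there is an irreducible word of length $k \ge 2$, then there is an irreducible word of length $k-1$. So I would fix an irreducible word $w$ with $l(w)=k\ge 2$ and aim to produce an irreducible word of length $k-1$.

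First I would apply Lemma~\ref{lem_1} to write $w = uv$ as a product of two irreducible words $u,v$ with $l(u),l(v)\ge 1$ and $l(u)+l(v)=k$. If $l(v)\ge 2$, write $v = xy$ with $x,y$ products of elements of $\SS$; if $l(u)\ge 2$, write $u = xy$ similarly. The idea is to use the mixing property to ``reassociate'' $w$ so that one of the outermost letters gets absorbed, lowering the length by one. Concretely, for $w = (xy)z$ with $z\in\SS$ a single letter (taking $u=xy$, $v=z$), the mixing hypothesis expresses $(xy)z$ as a linear combination of the monomials in $P(x,y,z)\cup\{1\}$ (or $P(x,y,z)$ in the non-unital case). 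Every monomial in $P(x,y,z)$ of degree $3$ has the shape $(\ast z)\ast$ or $\ast(z\ast)$ or their mirrors — in each the factor involving $z$ is a product of $z$ with one of $x,y$, i.e. a subword of length $l(x)+1$ or $l(y)+1$, and then this is multiplied by the remaining factor; while the monomials of degree $\le 2$ are honestly shorter. The key point is that $w$, being irreducible of length $k$, cannot lie in $\L_{k-1}(\SS)$, so in its expansion some degree-$3$ monomial from $\widehat P(x,y,z)$ must occur with a value not already in $\L_{k-1}(\SS)$. Such a monomial, say $(xz)y$, equals a product of $xz$ (length $l(x)+1$) with $y$ (length $l(y)$), and since the total is still $k$ this doesn't immediately help — so instead I would track which proper subwords appear and argue that one of them, of length $k-1$, must be irreducible.

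More carefully, the cleanest route is induction on $k$ together with the observation that the set of lengths realized by irreducible words is ``downward dense with step $1$.'' Suppose for contradiction there is a gap: some $k\ge 2$ is realized by an irreducible word but $k-1$ is not, i.e. every word of length $k-1$ lies in $\L_{k-2}(\SS)$, equivalently $\L_{k-1}(\SS)=\L_{k-2}(\SS)$. But then, since the sequence $\L_i(\SS)$ stabilizes once two consecutive terms coincide (a standard fact proved exactly as in the associative case: if $\L_{k-1}=\L_{k-2}$ then $\L_k = \langle \SS^k\rangle \subseteq \langle \SS\cdot\L_{k-1} + \L_{k-1}\cdot\SS + \L_{k-1}\rangle$, and here is where I need a lemma saying products involving $\L_{k-1}$-elements stay in $\L_{k-1}$ — this is precisely where the mixing property is used to handle the re-bracketing), we would get $\L_{k-1}(\SS)=\L_k(\SS)=\cdots$, contradicting the existence of an irreducible word of length $k$. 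The mixing hypothesis is what lets me pass from ``$ab$ with $l(a)+l(b)\le k-1$ and both in $\L_{k-2}$'' — across all bracketings — to membership in $\L_{k-2}$: given any triple product, reassociate it via Definition~\ref{def_1} into a combination of monomials of the forms in $P$, each of which is a nested product whose inner factor has strictly smaller length, then induct on the bracketing structure.

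The main obstacle I anticipate is the bookkeeping in the stabilization step: showing rigorously that $\L_{k-1}(\SS)=\L_{k-2}(\SS)$ forces $\L_k(\SS)=\L_{k-1}(\SS)$ for a non-associative algebra. In the associative case this is a one-line semigroup argument, but here an arbitrary word of length $k$ can be bracketed in many ways, and one must repeatedly apply the mixing identity to reduce any bracketing to one where a designated letter sits at depth one. I expect the proof to proceed by an inner induction on the number of ``bad'' bracketings, each application of Definition~\ref{def_1} (with a suitable choice of which variable plays the role of $z$) strictly decreasing the relevant complexity measure while keeping every resulting monomial a product of two subwords of total length $\le k$ each lying in $\L_{k-2}(\SS)$ — hence the product lies in $\L_{k-1}(\SS)=\L_{k-2}(\SS)$. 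Assembling these reductions gives $\SS^k\subseteq\L_{k-1}(\SS)$, i.e.\ $\L_k(\SS)=\L_{k-1}(\SS)$, the desired contradiction.
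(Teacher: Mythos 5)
Your reduction in the first paragraph is correct: by Corollary~\ref{cor_core} the entries of the characteristic sequence are exactly the lengths of irreducible words, so it suffices to show that an irreducible word of length $k\ge 2$ forces an irreducible word of length $k-1$ (equivalently, that the chain $\L_i(\SS)$ cannot stabilize before reaching $\A$). You also correctly identify that the mixing identity must be used to re-bracket a triple product, and that the whole difficulty is making the re-bracketing process terminate. But that termination argument --- the actual content of the lemma --- is missing from your proposal. The paper supplies it with an extremal choice: writing each word $w$ of length $\ge 2$ uniquely as $w'\cdot w''$ and setting $s(w)=\min(l(w'),l(w''))$, it first uses the gap $m_{k+1}-m_k\ge 2$ to show $s(w)\ge 2$ for every irreducible $w$ of length $m_{k+1}$ (if $s(w)=1$, Lemma~\ref{lem_1} already yields an irreducible word of length $m_{k+1}-1$, which is impossible), then picks an irreducible $w_0$ of length $m_{k+1}$ with $s(w_0)$ \emph{minimal}, splits its shorter factor as $w'_1w'_2$, and applies the mixing identity to $(w'_1w'_2)w''_0$. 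Every degree-$3$ monomial of $P(w'_1,w'_2,w''_0)$ has $s$-value at most $\max(l(w'_1),l(w'_2))<s(w_0)$, so by minimality none of them is irreducible; the degree-$\le 2$ monomials are strictly shorter; hence $w_0\in\L_{m_{k+1}-1}(\SS)$, contradicting irreducibility. Your proposal has no substitute for this: the ``relevant complexity measure'' that is supposed to decrease under rewriting is never defined.

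Two concrete errors in your stabilization sketch. First, the inclusion $\L_k(\SS)\subseteq\langle\SS\cdot\L_{k-1}(\SS)+\L_{k-1}(\SS)\cdot\SS+\L_{k-1}(\SS)\rangle$ is false for non-associative algebras: a word such as $(ab)(cd)$ is a product of two words of length $2$, not a letter times a word of length $3$, and the mixing identity only lets you push the \emph{outer} factor of a triple product one level inward, so reducing an arbitrary bracketing to one with a letter at depth one is itself the nontrivial claim, not a starting point. Second, the inference ``a product of two subwords each lying in $\L_{k-2}(\SS)$ lies in $\L_{k-1}(\SS)$'' is a non sequitur: from $a,b\in\L_{k-2}(\SS)$ one only gets $ab\in\L_{2k-4}(\SS)$. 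What you actually need is that one of the two factors, \emph{as a word}, is reducible, i.e.\ a combination of strictly shorter irreducible words --- that is exactly what the paper's extremal argument delivers and what your proposal does not establish.
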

\begin{proof}
	Assume the contrary. Let $\A$ be a mixing algebra, $\SS$ be its generating set, and assume that there exists $j$ such that $1\le j\le d-1$ and the inequality $m_{j+1}-m_j  \le 1$ does not hold. Let $k$ be the smallest index such that $m_{k+1} - m_k \ge 2$. 
	
	Consider a word $w$ of length at least two. It can be uniquely represented as $w=w' \cdot w''$, where $w'$ and $w''$ have non-zero lengths. We denote $s(w) = \min(l(w'),l(w''))$.

	1. Consider an irreducible word $w$ in $\SS$ of length $m_{k+1}$. Then $s(w)>1$. Indeed, if $s(w)=1$ then $w$ is a product of irreducible words of length 1 and $m_{k+1}-1$ by Lemma \ref{lem_1}. Hence by Corollary \ref{cor_core}, Item 2, there is an element  equal to $m_{k+1} -1$ in the characteristic sequence $M$. This is impossible, since $M$ is non-decreasing and $m_k < m_{k+1}-1$ by the assumption.

	 2. Let us choose an irreducible word $w_0$ of length $m_{k+1}$ in $\SS$ such that $s(w_0)$ is the smallest. If there are several such words, we take any one of them. The chosen word can be represented as a product of two irreducible words, $w'_0$ and $w''_0$, such that $l(w'_0)=s(w_0)$. Thus we have the following 2 cases:
	
	Case 1: $w_0=w'_0 \cdot w''_0$. By Item 1 of the proof $s(w_0) \ge 2$. Hence $w'_0 = w'_1 \cdot w'_2$, where $w'_1,w'_2$ are both  irreducible and $l(w'_1)<l(w_0'),\ l(w'_2)<l(w_0')$. The algebra $\A$ is mixing, which means that the irreducible word $w_0 = (w'_1\cdot w'_2) \cdot w''_0 \in  \langle P(w'_1,w'_2,w''_0)  \rangle$, and from this follows that at least one element of $T_l(w'_1,w'_2,w''_0)$ is an irreducible word, as elements of  $P(w'_1,w'_2,w''_0) \setminus \widehat{P}(w'_1,w'_2,w''_0)$  have strictly lesser length than $w_0$. Assume that  $(w'_1 \cdot w''_0) \cdot w'_2$ is irreducible. Then we have $s( (w'_1 \cdot w''_0) \cdot w'_2 ) = l(w'_2) < l(w'_0) = s(w_0)$, which contradicts our choice of $w_0$. For other elements of $T_l$ a similar reasoning holds. Thus the initial assumption is false, i.e. mixing algebra cannot have a generating set with such a characteristic sequence that the difference between neighboring element is greater than~1.
	
	Case 2: $w_0=w''_0 \cdot w'_0$. We obtain the same contradiction similarly, considering an irreducible element of $T_r(w'_1,w'_2,w''_0)$ instead of $T_l(w'_1,w'_2,w''_0)$.
\end{proof}

In the next lemma we need the following sets of monomials. 
Let $\widehat{Q_l}(x,y,z)\subset Q_l(x,y,z)$  and  $\widehat{Q_r}(x,y,z)\subset Q_r(x,y,z)$ be the subsets of degree 3 monomials. For a sliding algebra $\A$ let $S_l(x,y,z)  \subseteq \widehat{Q_l}(x,y,z)$, respectively $S_r(x,y,z)  \subseteq \widehat{Q_r}(x,y,z)$, be the set of monomials   that are included with  non-zero coefficients in  at least one of the representations of $(x   y)   z$, respectively  $z    (x   y)$,  as linear combinations of elements of $Q_l(x,y,z)\cup \{1\}$ or $Q_l(x,y,z)$, respectively  $Q_r(x,y,z)\cup \{1\}$ or $Q_r(x,y,z)$,  in the algebra~$\A$.

\begin{lemma}\label{lem_smallstepZ}
Let $\A$ be  a sliding algebra,  $\SS$ be a generating set of $\A$, and   $M=(m_1,\ldots,m_d)$ be a characteristic sequence of $\SS$ in $\A$. Then   $m_{j+1}-m_j  \le 1$  for all $j=1,\ldots,d-1$.	 
\end{lemma}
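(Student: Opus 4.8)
The plan is to mimic the structure of the proof of Lemma~\ref{lem_smallstep} for mixing algebras, but to keep careful track of which side of the product the ``outer'' factor $z$ ends up on, since a sliding algebra only controls one of the two expansions $z(xy)$ or $(xy)z$. First I would argue by contradiction: assume the characteristic sequence $M=(m_1,\ldots,m_d)$ of $\SS$ has a jump $m_{k+1}-m_k\ge 2$, choosing $k$ minimal. As in the mixing case, I introduce for a word $w=w'\cdot w''$ of length $\ge 2$ the quantity $s(w)=\min(l(w'),l(w''))$, and I first observe (verbatim from Item~1 of Lemma~\ref{lem_smallstep}, using Lemma~\ref{lem_1} and Corollary~\ref{cor_core}, Item~2) that any irreducible word $w$ of length $m_{k+1}$ must have $s(w)\ge 2$, because otherwise $m_{k+1}-1$ would appear in $M$, contradicting $m_k<m_{k+1}-1$.

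Next I would pick an irreducible word $w_0$ of length $m_{k+1}$ with $s(w_0)$ minimal, and write $w_0=w'_0\cdot w''_0$ or $w_0=w''_0\cdot w'_0$ with $l(w'_0)=s(w_0)\ge 2$, so that $w'_0=w'_1\cdot w'_2$ with $w'_1,w'_2$ irreducible of strictly smaller length. The key point is that now I split according to \emph{which} defining property of the sliding algebra holds. Suppose first that property~2 of Definition~\ref{def_2} holds, i.e. $(xy)z\in\langle Q_l(x,y,z)\rangle$ (or with $1$ adjoined in the unital case). If $w_0=w'_0\cdot w''_0=(w'_1w'_2)w''_0$, then $w_0\in\langle Q_l(w'_1,w'_2,w''_0)\rangle$, and since every element of $Q_l\setminus\widehat{Q_l}$ has length strictly less than $l(w_0)$, irreducibility of $w_0$ forces some element of $S_l(w'_1,w'_2,w''_0)$ to be irreducible. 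But each monomial in $\widehat{Q_l}$ has the shape $u\,(v\,t)$ with $\{u,v\}=\{w'_1,w''_0\}$ and $t=w'_2$ in some order, hence for such an irreducible word $u(vt)$ we get $s(u(vt))\le l(t')$ for the appropriate sub-factor $t'$, and in every case this minimum is $\le l(w'_2)<l(w'_0)=s(w_0)$, contradicting the minimality of $s(w_0)$. I would spell out that in each of the four monomials $x(zy),x(yz),y(xz),y(zx)$ the relevant inner factor substituted by $w'_1$ or $w'_2$ has length $<s(w_0)$, so $s$ of the resulting word is strictly smaller. The symmetric sub-case $w_0=w''_0\cdot w'_0=w''_0(w'_1w'_2)$ is not directly covered by property~2 as stated; here I would instead expand using $z(xy)\in\langle Q_r(x,y,z)\rangle$ under property~1, or, if only property~2 holds, re-bracket $w'_0$ so that the outer multiplication is again of the form $(\cdot\,\cdot)\cdot$ --- concretely, for $w_0 = w''_0 \cdot w'_0$ I would instead take the decomposition realizing $s(w_0)$ on the \emph{left} factor by choosing $w_0 = w'_0 \cdot w''_0$ whenever possible, and otherwise invoke the appropriate one of $Q_l,Q_r$. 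The case where property~1 of Definition~\ref{def_2} holds is entirely symmetric, swapping the roles of left and right and using $Q_r$ and $S_r$ in place of $Q_l$ and $S_l$.

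\textbf{Main obstacle.} The delicate point, and the one I expect to require the most care, is the bookkeeping of which decomposition $w_0=w'_0\cdot w''_0$ versus $w_0=w''_0\cdot w'_0$ is compatible with the single sliding identity that is assumed to hold. In the mixing case both expansions of $(xy)z$ and $z(xy)$ are available, so either bracketing of $w_0$ can be rewritten; for a sliding algebra only one of them is guaranteed, so I must make sure that the irreducible word $w_0$ of minimal $s$-value can always be presented with its outer product in the ``correct'' orientation. The resolution is that when $s(w_0)$ is attained on the left factor we use property~2 (the $Q_l$ identity), and when it is attained on the right factor we use property~1 (the $Q_r$ identity); if the sliding algebra only satisfies one of the two, we simply choose $w_0$'s decomposition $w_0 = w'_0 \cdot w''_0$ so that $l(w'_0) = s(w_0)$ sits on whichever side that property addresses, which is always possible since $s(w_0)$ is symmetric in the two factors. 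Once this alignment is fixed, the length estimate $s(\text{new word})\le l(w'_2)<l(w'_0)=s(w_0)$ goes through exactly as in Lemma~\ref{lem_smallstep}, yielding the desired contradiction and hence $m_{j+1}-m_j\le 1$ for all $j$.
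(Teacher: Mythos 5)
There is a genuine gap, and it is exactly at the point you flag as the main obstacle. Your argument minimizes $s(w)=\min(l(w'),l(w''))$ and then asserts that you can ``choose $w_0$'s decomposition so that $l(w'_0)=s(w_0)$ sits on whichever side that property addresses.'' You cannot: as the paper notes, a word of length $\ge 2$ has a \emph{unique} top-level factorization $w=w'\cdot w''$, so if the shorter factor happens to sit on the right while only property~2 (the $Q_l$ identity for $(xy)z$) is available, there is no freedom to move it to the left, and invoking ``the appropriate one of $Q_l,Q_r$'' is not an option since a sliding algebra is only guaranteed to satisfy one of the two identities. In that bad sub-case your estimate breaks down concretely: writing $w_0=(u_1u_2)\cdot w'_0$ with $l(u_1u_2)>l(w'_0)=s(w_0)$ and expanding via $Q_l(u_1,u_2,w'_0)$, the resulting monomials have outer left factor $u_1$ or $u_2$, whose lengths are only known to be $<l(u_1u_2)$, not $<s(w_0)$; e.g.\ with $l(u_1)=7$, $l(u_2)=3$, $l(w'_0)=5$ the monomial $u_1(u_2w'_0)$ has $s=7>5=s(w_0)$, so the one element of $S_l$ that happens to be irreducible need not contradict minimality of $s(w_0)$.

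The paper's proof avoids this by replacing the symmetric quantity $s(w)$ with a one-sided one matched to the orientation of the available identity: assuming property~1 ($z(xy)\in\langle Q_r(x,y,z)\rangle$), it minimizes $l_r(w)=l(w'')$, the length of the \emph{right} factor, over irreducible words of length $m_{k+1}$. After ruling out $l_r(w_0)=1$ by your Item-1 argument, one writes $w''_0=w''_1\cdot w''_2$ and expands $w_0=w'_0\cdot(w''_1w''_2)$ via $Q_r(w''_1,w''_2,w'_0)$; every degree-3 monomial there has the form $(\cdot\,\cdot)\cdot w''_i$, so its right factor is $w''_1$ or $w''_2$ and $l_r$ strictly decreases for \emph{all} candidates, whichever one turns out to be irreducible. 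The case where only property~2 holds is handled symmetrically with $l_l(w)=l(w')$ and $Q_l$. Your treatment of the ``aligned'' sub-case is correct and essentially matches this, but the proof needs the invariant itself to be one-sided rather than a minimum for the misaligned sub-case to go through.
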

\begin{proof}
	Assume the contrary: let $\A$ be a sliding algebra satisfying Item 1 of Definition \ref{def_2}. The other cases can be considered similarly. Let $\SS$ be a generating set of $\A$ such that for its characteristic sequence $M$ the inequality $m_{j+1}-m_j  \le 1$ does not hold for all $j=1,\ldots,d-1$. Let $k$ be the smallest index such that $m_{k+1} - m_k \ge 2$. 

	Consider a word $w$ of length at least two. It can be uniquely represented as $w=w' \cdot w''$, where $w'$ and $w''$ have non-zero lengths. We denote $l_r(w) =l(w'')$.

	Let us choose such an irreducible word $w_0$ of length $m_{k+1}$ in $\SS$ that  $l_r(w)$ is minimal (if there are multiple possible candidates, we can choose one at random). By Lemma \ref{lem_1} $w_0$ is equal to $w'_0 \cdot w''_0$, where terms are irreducible and of lesser length.
	
	1. $l_r(w_0)=l(w''_0)=1$ cannot hold:  this would mean that $l(w'_0)=m_{k+1}-1$ which by Corollary \ref{cor_core}, Item 2 would mean that there is an element of characteristic sequence equal to $m_{k+1} -1$, and that is impossible: $M$ is non-decreasing and $m_k < m_{k+1}-1$ already.
	
	2. If $l(w''_0) >1$, we can represent $w''_0$ as $w''_1 \cdot w''_2$, where both $w''_1, w''_2$ are of positive length and irreducible. The algebra $\A$ is sliding, which means that the irreducible word $w_0= w'_0 \cdot (w''_1 \cdot w''_2) \in \langle Q_r (w''_1,w''_2,w'_0) \rangle$ (or $\langle Q_r (w''_1,w''_2,w'_0),1 \rangle$), and from this follows that at least one element of $S_r(w''_1,w''_2,w'_0)$ is an irreducible word as elements of  $Q_r(w'_1,w'_2,w''_0) \setminus \widehat{Q_r}(w'_1,w'_2,w''_0)$  have strictly lesser length than $w_0$.  Assume it is $(w''_1 \cdot w'_0) \cdot w''_2$. Then we have a contradiction as $l((w''_1 \cdot w'_0) \cdot w''_2)=l(w_0)$, but  $l(w''_2) < l(w''_0)$.  For other elements of $T_r$ a similar observation holds. Thus the initial assumption is false, i.e. sliding algebra cannot have a generating set with such a characteristic sequence that the difference between neighboring element is greater than 1.

\end{proof}

\begin{theorem}\label{th_sg}
	The length of a mixing or a sliding algebra $\A$ of dimension $d \ge 2$ is less than or equal to~$d$.
\end{theorem}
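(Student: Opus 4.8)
The plan is to combine the two small-step lemmas just proved with the basic structural facts about characteristic sequences from Section 2. By Lemma~\ref{lem_smallstep} (if $\A$ is mixing) or Lemma~\ref{lem_smallstepZ} (if $\A$ is sliding), the characteristic sequence $M=(m_1,\ldots,m_d)$ of any generating set $\SS$ of $\A$ satisfies $m_{j+1}-m_j\le 1$ for all $j=1,\ldots,d-1$; note that by Lemma~\ref{N=n} this sequence has exactly $N=\dim\A=d$ terms, so the indexing is consistent.

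\begin{proof}
Let $\A$ be mixing or sliding with $\dim\A=d\ge 2$, and let $\SS$ be an arbitrary generating set of $\A$. Denote by $M=(m_1,\ldots,m_d)$ the characteristic sequence of $\SS$; by Lemma~\ref{N=n} it indeed has $d$ terms and $m_d=l(\SS)$. By Definition~\ref{CharSeqB} the first term satisfies $m_1\le 1$ (it equals $0$ if $\A$ is unital and $1$ otherwise). Applying Lemma~\ref{lem_smallstep} in the mixing case, or Lemma~\ref{lem_smallstepZ} in the sliding case, we get $m_{j+1}-m_j\le 1$ for every $j=1,\ldots,d-1$. Summing these inequalities telescopically,
\[
l(\SS)=m_d=m_1+\sum_{j=1}^{d-1}(m_{j+1}-m_j)\le 1+(d-1)=d.
\]
Since $\SS$ was an arbitrary generating set, taking the maximum over all generating sets gives $l(\A)=\max\{l(\SS):\L(\SS)=\A\}\le d$, as required.
\end{proof}

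There is essentially no obstacle here: the theorem is a direct corollary of the two preceding lemmas together with Lemma~\ref{N=n} and the definition of the characteristic sequence. The only point requiring a moment's care is the value of the initial term $m_1$ — it is $0$ in the unital case and $1$ in the non-unital case, so in both cases $m_1\le 1$ and the telescoping bound yields $d$ rather than $d-1$; one could sharpen to $d-1$ in the unital case, but the uniform statement $l(\A)\le d$ is all that is claimed. One should also make sure the maximum in Definition~\ref{alg_len} is attained (it is, since lengths of generating sets are non-negative integers bounded by $d$, so the set of attainable values is finite and nonempty).
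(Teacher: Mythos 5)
Your proof is correct and follows essentially the same route as the paper: both apply Lemma~\ref{lem_smallstep} or Lemma~\ref{lem_smallstepZ} to the characteristic sequence, note $m_1\le 1$, and telescope to get $m_d\le d$. The only cosmetic difference is that the paper fixes a generating set with $l(\SS)=l(\A)$ at the outset while you bound an arbitrary $\SS$ and then take the maximum; these are equivalent.
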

\begin{proof}
	Follows directly from Lemma \ref{lem_smallstep} or Lemma \ref{lem_smallstepZ}: for a generating set $\SS$ of $\A$ with $l(\SS)=l(\A)$ and characteristic sequence $(m_1,\ldots,m_d)$ we have $m_1\le1$ and $l(\SS) =m_d \le m_{d-1} +1 \le \ldots \le m_1 + (d-1) \le d$.
\end{proof}

However, it is not necessary for an algebra $\A$ to be mixing or sliding to satisfy $l(\A) \le \dim (\A)$ as the following example shows.

\begin{example}

Consider an algebra $\A$  over field $\F$ with basis $e_0=1_\F, e_1, \ldots, e_4$ and the following multiplication law:

$$e_1 e_1 = e_2, \ e_2 e_2 =e_3, \ e_1 e_3 = e_4,$$

and other products equal to 0. This is a so-called bare algebra of the sequence $(0,1,2,4,5)$ and its length is equal to 5, as is its dimension, see \cite[Definition 3.3, Theorem 3.15]{GutK19}.

However, it is neither mixing nor sliding. To prove this, consider $$P(e_1,e_1,e_2) = \left \{ \begin{matrix} (e_1   e_2)   e_1, (e_2   e_1)   e_1,  e_1   (e_2   e_1), e_1  (e_1  e_2),\\ e_1  e_1,  e_1  e_2, e_2  e_1, e_1, e_2 \end{matrix} \right \}=$$

$=\{0, e_1,e_2\}$. Since $(e_1 e_1) e_2 =e_3 \not\in \langle  P(e_1,e_1,e_2) \cup \{1\} \rangle$, $\A$ is not mixing. As $P (e_1, e_1, e_2) \supset Q_l(e_1,e_1,e_2)$, this also means that the first property of Definition \ref{def_2} does not hold. To demonstrate that the second property does not hold, note that $(e_1 e_1) e_2 = e_2 (e_1 e_1)$ and $P (e_1, e_1, e_2) \supset Q_r(e_1,e_1,e_2)$, which means $ e_2 (e_1 e_1)\not\in \langle  Q_r(e_1,e_1,e_2) \cup \{1\} \rangle$.

\end{example}

\section{Important classes of non-associative algebras and slowly growing length}

Non-associative algebras are very important in mathematics and its applications, see \cite{book1,book2,book3} and their bibliography. Now we examine standard classes of algebras of slowly growing length. Recall that in this paper all algebras are finite dimensional.

The following lemma is useful in establishing various examples for algebras with polynomial identities.

\begin{lemma}\label{lem_distr}
	Consider a finite-dimensional algebra $\A$ over field $\F$, its basis $\{e_1,\ldots,e_d\}$ and multilinear function $G$ of $k$ arguments such that $G(e_{i_1}, \ldots, e_{i_k}) = 0$ for all $i_t \in \{1,\ldots,d\}$. For all $a_1,\ldots,a_k \in \A$ holds $G(a_1, \ldots, a_k) =0$.
\end{lemma}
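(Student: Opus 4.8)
The plan is to expand each argument $a_t$ in the given basis and then use multilinearity of $G$ to reduce to the vanishing hypothesis on basis vectors. Concretely, write $a_t = \sum_{i_t=1}^{d} \lambda_{t,i_t} e_{i_t}$ with scalars $\lambda_{t,i_t} \in \F$ for each $t = 1,\ldots,k$. This is possible since $\{e_1,\ldots,e_d\}$ is a basis of $\A$.

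Next I would substitute these expansions into $G(a_1,\ldots,a_k)$ and repeatedly apply the linearity of $G$ in each slot. Since $G$ is multilinear, one obtains
$$G(a_1,\ldots,a_k) = \sum_{i_1=1}^{d} \cdots \sum_{i_k=1}^{d} \left( \prod_{t=1}^{k} \lambda_{t,i_t} \right) G(e_{i_1},\ldots,e_{i_k}).$$
Strictly speaking this identity is justified by induction on $k$: pulling the sum and scalars out of the first argument, then out of the second, and so on. By hypothesis every term $G(e_{i_1},\ldots,e_{i_k})$ in this sum equals $0$, hence the whole sum is $0$, which gives $G(a_1,\ldots,a_k)=0$ as desired.

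There is essentially no obstacle here; the only point requiring a line of care is the inductive justification of the multilinear expansion, and one should be explicit that "multilinear" means $\F$-linear in each argument separately (so scalars and finite sums may be extracted slot by slot). I would state the induction briefly rather than writing out all $k$ steps, since the pattern is clear after the first two. The result is then immediate from the assumption $G(e_{i_1},\ldots,e_{i_k})=0$ for all index tuples.
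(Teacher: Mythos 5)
Your proof is correct and coincides with the paper's own argument: expand each $a_t$ in the basis, use multilinearity to distribute the sums and scalars, and observe that every resulting term $G(e_{i_1},\ldots,e_{i_k})$ vanishes by hypothesis. The extra remark about justifying the expansion by induction on $k$ is a reasonable point of care but not something the paper bothers to spell out.
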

\begin{proof}
As $\{e_1,\ldots,e_d\}$ is a basis of $\A$, there exist such $r_{ij} \in \F$ that $a_i = r_{i1}e_1 + \ldots + r_{id}e_d$ for all $i \in {1,\ldots,k}$. We have 

$$G(a_1, \ldots, a_k) = G(r_{11}e_1 + \ldots +r_{1d}e_d, \ldots, r_{k1}e_1 + \ldots +r_{kd}e_d) = $$

$$= \sum_{j_1,\ldots,j_k \in \{1,\ldots,d\} } r_{1 j_1} \ldots r_{k j_k} G( e_{j_1},\ldots,e_{j_k}) = 0.$$
\end{proof}

\subsection{Lie and Leibniz algebras}

\begin{definition}
	An algebra $\A$ is called a {\em Lie algebra} if
	
	1. $xy=-yx$ for all $x,y\in \A$,
	
	2. $(xy)z+(yz)x+(zx)y=0$ for all $x,y,z\in \A$.
\end{definition}

Trivially, Lie algebras are non-unital.

One possible generalization of Lie algebras are Leibniz algebras.

\begin{definition}
	An algebra $\A$ is called a {\em Leibniz algebra} if $(xy)z = x (yz) + (xz) y$ for all  $x,y,z\in \A$.
\end{definition}

An overview of Leibniz algebras can be found in \cite{Feldvoss}.

\begin{proposition}
	Leibniz algebras  are both mixing and sliding. 
\end{proposition}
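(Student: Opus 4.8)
The plan is to verify directly that the defining identity of a Leibniz algebra rewrites both $(xy)z$ and $z(xy)$ as linear combinations of the monomials in $P(x,y,z)$, which by Definition~\ref{def_1} is exactly what it means to be mixing, and then observe that the same computation (suitably read off) gives membership in $\langle Q_l \rangle$ and in $\langle Q_r \rangle$, establishing the sliding property via either item of Definition~\ref{def_2}. Since Leibniz algebras are non-unital we work with the non-unital versions of both definitions throughout, so no $1$ needs to be adjoined.

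First I would handle $(xy)z$. The Leibniz identity $(xy)z = x(yz) + (xz)y$ is already a linear combination of $x(yz)$ and $(xz)y$, both of which lie in $Q_l(x,y,z)$ (the first is of the form $x\cdot(y z)$ with $z$ the second factor inside, the second is of the form $(x z)\cdot y$); hence $(xy)z \in \langle Q_l(x,y,z)\rangle \subseteq \langle P(x,y,z)\rangle$. This immediately gives Item~2 of Definition~\ref{def_2}, so Leibniz algebras are sliding, and it is also the first half of the mixing condition. Second I would handle $z(xy)$. Here I would apply the Leibniz identity with the arguments permuted: writing the identity as $(uv)w = u(vw) + (uw)v$ and choosing $u,v,w$ appropriately, one expresses $z(xy)$ in terms of products where the inner bracket contains only $z$ together with one of $x,y$. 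Concretely, one wants to rewrite $z(xy)$; a clean route is to first use the identity to move things around: from $(zx)y = z(xy) + (zy)x$ we get $z(xy) = (zx)y - (zy)x$. Both $(zx)y$ and $(zy)x$ belong to $Q_r(x,y,z)$, and hence to $P(x,y,z)$; therefore $z(xy)\in\langle Q_r(x,y,z)\rangle \subseteq \langle P(x,y,z)\rangle$, which is Item~1 of Definition~\ref{def_2} and the second half of the mixing condition.

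Putting the two computations together: $(xy)z\in\langle P(x,y,z)\rangle$ and $z(xy)\in\langle P(x,y,z)\rangle$ for all $x,y,z\in\A$, so $\A$ is mixing by Definition~\ref{def_1}; and separately each of the two identities already lands inside $\langle Q_l\rangle$ respectively $\langle Q_r\rangle$, so $\A$ is sliding by Definition~\ref{def_2} (indeed it satisfies both items). Strictly speaking the identity $(zx)y = z(xy) + (zy)x$ must itself be justified as a consequence of the Leibniz identity $(uv)w = u(vw)+(uw)v$: it is simply that identity read with $u=z$, $v=x$, $w=y$, i.e. $(zx)y = z(xy)+(zy)x$, so rearranging gives $z(xy)=(zx)y-(zy)x$ with no further work. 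I would remark that, as in the associative case (the Remark preceding), the result then follows that $l(\A)\le\dim\A$ by Theorem~\ref{th_sg}, though that is a corollary rather than part of this proposition.

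The main obstacle is essentially bookkeeping rather than mathematics: one must check carefully that the monomials appearing after rewriting — namely $x(yz)$, $(xz)y$, $(zx)y$, $(zy)x$ — are genuinely members of the explicitly listed sets $Q_l(x,y,z)$ and $Q_r(x,y,z)$ with the correct roles of the distinguished variable $z$ (inside the first multiplication, as the appropriate factor of the second multiplication). This amounts to matching each term against the thirteen-element lists in the definitions, which is routine but must be done with the variable names in the right slots; there is no genuine difficulty, and no case analysis beyond the two displayed identities, since the Leibniz identity is a single multilinear relation and Lemma~\ref{lem_distr} is not even needed here because the identity is assumed to hold for all algebra elements directly.
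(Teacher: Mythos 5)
Your proposal is essentially the paper's own (terse) argument: direct verification of Definitions~\ref{def_1} and \ref{def_2} from the Leibniz identity, and your two computations $(xy)z = x(yz)+(xz)y$ and $z(xy)=(zx)y-(zy)x$ do correctly establish that Leibniz algebras are mixing and sliding. However, there is one bookkeeping error you should fix: $(xz)y$ is \emph{not} an element of $Q_l(x,y,z)$ --- the degree-three monomials of $Q_l$ are exactly $x(zy)$, $x(yz)$, $y(xz)$, $y(zx)$, i.e.\ those where the factor containing $z$ sits in the \emph{second} slot of the outer product, whereas $(xz)y$ has it in the first slot and therefore lies in $Q_r$ only. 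Consequently your first computation shows $(xy)z\in\langle P(x,y,z)\rangle$ (which is what mixing needs) but does \emph{not} establish Item~2 of Definition~\ref{def_2}, and your closing claim that both items of the sliding definition hold is unjustified. This does not damage the proposition: Definition~\ref{def_2} requires only one of the two items, and your second computation correctly places $z(xy)$ in $\langle Q_r(x,y,z)\rangle$ via the monomials $(zx)y$ and $(zy)x$, which are genuinely listed in $Q_r$; so sliding follows from Item~1 alone. Just delete the assertion that $(xy)z\in\langle Q_l\rangle$ and derive sliding solely from the $z(xy)$ identity.
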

\begin{proof}
	Required properties of Definitions \ref{def_2} and \ref{def_1} are evident from the definition.
\end{proof}

\begin{corollary}
Leibniz algebras have slowly growing length.
\end{corollary}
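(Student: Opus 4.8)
The plan is to invoke Theorem~\ref{th_sg} directly. The statement to prove is that Leibniz algebras have slowly growing length, i.e.\ $l(\A)\le\dim(\A)$ for every finite-dimensional Leibniz algebra $\A$.

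\begin{proof}
By the preceding Proposition, every Leibniz algebra is mixing (and also sliding). If $\dim\A \ge 2$, then Theorem~\ref{th_sg} applies and yields $l(\A)\le\dim(\A)$. If $\dim\A\le 1$, the claim is immediate: a one-dimensional algebra is generated by a single element, so $l(\SS)\le 1=\dim\A$ for any generating set $\SS$ (and a zero-dimensional algebra has length $0$). Hence $l(\A)\le\dim(\A)$ in all cases, which is precisely the definition of having slowly growing length.
\end{proof}

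The only thing to watch is the dimension hypothesis $d\ge 2$ in Theorem~\ref{th_sg}; the small-dimensional cases are handled by the trivial bound above, so there is no real obstacle here. The substantive content has already been carried out in the Proposition (verifying the polynomial containment $(xy)z,\,z(xy)\in\langle P(x,y,z)\rangle$ from the Leibniz identity $(xy)z=x(yz)+(xz)y$, together with the corresponding identity obtained by using the Leibniz relation to rewrite $z(xy)$) and in Lemma~\ref{lem_smallstep} and Theorem~\ref{th_sg}. So this corollary is a one-line consequence, and the main ``work'' is simply citing the chain Proposition $\Rightarrow$ Lemma~\ref{lem_smallstep} $\Rightarrow$ Theorem~\ref{th_sg}.
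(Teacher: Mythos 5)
Your proposal is correct and follows exactly the paper's intended route: the corollary is an immediate consequence of the preceding Proposition (Leibniz algebras are mixing and sliding) together with Theorem~\ref{th_sg}, and the paper itself gives no further argument. Your extra care with the low-dimensional cases excluded by the hypothesis $d\ge 2$ is a harmless and reasonable addition.
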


Below we provide an example that this bound is sharp for the class of Leibniz algebras. 

\begin{example}
	Consider the algebra $\B_d$ with the basis $x_1,\ldots,x_d$, $d \ge 3$ and the following multiplication law:
		$$x_i x_1 = x_{i+1}, \ i=1,\ldots, d-1,$$
	with the other products being zero. Since $l(\B_d) \le d$ and $l(\B_d) \ge l(\{x_1\}) = d$, we have $l(\B_d) = d$.
	
Let us show that $\B_d$ is indeed a Leibniz algebra. We consider arbitrary elements $u,y,z \in \B_d$ and their representations via basis above. Let $c_y$ and  $c_z$ be the coefficients at $x_1$ of $y$ and $z$, correspondingly. 

Due to the multiplication rules. the coefficient at $x_1$ of $y z$  is equal to zero, which implies that $u (y z) =0$. Meanwhile $(u y) z = (u y) (c_z x_1) = c_z (u y) x_1 = c_z (u (c_y x_1)) x_1 = c_z c_y (u x_1) x_1 =  c_y (u (c_z x_1)) x_1 = c_y (u z) x_1 = (u z) (c_y x_1)  = (u z) y$. Combining these two formulas we achieve  $(uy)z = u (yz) + (uz) y$.
\end{example}

For Lie algebras the above bound can be slightly improved.

\begin{proposition}\label{prop_lie}
	The length of a Lie algebra $\A$ of dimension $d \ge 2$ is not greater than $d-1$.
\end{proposition}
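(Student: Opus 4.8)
Lie algebras are Leibniz (from $(xy)z = x(yz) - (yx)z = x(yz) + (xz)y$ using anticommutativity), hence by the preceding corollary they have slowly growing length, i.e. $l(\A) \le d$. The plan is to squeeze out one more unit by exploiting the extra structure of Lie algebras, namely that a single generator generates only a $1$-dimensional (abelian) subalgebra. Concretely, I would argue that the characteristic sequence $(m_1,\ldots,m_d)$ of any generating set $\SS$ with $l(\SS) = l(\A)$ cannot simultaneously start at the minimal value and rise by a full step at every subsequent index; one of these two slacks must occur, and either way $m_d \le d-1$.

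\textbf{Key steps.} First, I would recall from Lemma~\ref{lem_smallstep} (Lie is mixing, being Leibniz) that $m_{j+1} - m_j \le 1$ for all $j$, and that $m_1 \ge 1$ since Lie algebras are non-unital. If $\dim \L_1(\SS) \ge 2$, then $m_1 = m_2 = 1$, so there is an index where the sequence fails to increase, and telescoping gives $m_d \le m_1 + (d-1) - 1 = d-1$. So the only remaining case is $\dim \L_1(\SS) = 1$, i.e. $\SS$ spans a one-dimensional space $\langle a \rangle$. Here the second step is the heart of the matter: I would show that in this case $\SS$ does not generate $\A$ at all when $d \ge 2$ (so this case is vacuous), because all words in a single element $a$ of a Lie algebra vanish beyond length $1$ — indeed $aa = 0$ by anticommutativity, and every longer word contains a factor $aa = 0$ or, after reassociating using the Jacobi/Leibniz identity, reduces to a linear combination of such words, so $\L(\SS) = \langle a \rangle \ne \A$. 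Hence $\dim\L_1(\SS) \ge 2$ necessarily, and the telescoping bound $m_d \le d - 1$ applies to the generating set achieving $l(\A)$.

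\textbf{Main obstacle.} The delicate point is the rigorous claim that a one-element set cannot generate a Lie algebra of dimension $\ge 2$; one must be careful that "words in $a$" of length $\ge 2$ all lie in $\langle aa \rangle = \{0\}$. This is immediate for length $2$; for length $n \ge 3$ a word is $w' w''$ with $w', w''$ words in $a$, and by induction both lie in $\langle a \rangle$ (for length $1$) or are $0$ (for length $\ge 2$), so $w = w'w'' \in \langle aa\rangle = 0$. Thus $\L(\SS) \subseteq \langle a \rangle$, forcing $\dim \A = 1$, a contradiction. Once this is in place the proposition follows purely from the characteristic-sequence bookkeeping already developed. An alternative, possibly cleaner, phrasing: since every generating set $\SS$ of $\A$ with $d \ge 2$ must have $\dim \L_1(\SS) \ge 2$, the characteristic sequence has $m_1 = m_2 = 1$, whence $l(\SS) = m_d \le 1 + (d-2) = d-1$; taking the maximum over $\SS$ gives $l(\A) \le d-1$.
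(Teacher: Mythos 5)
Your proposal is correct and follows essentially the same route as the paper: establish the step bound $m_{j+1}-m_j\le 1$ via the mixing/Leibniz property, observe that $a^2=0$ forces every word of length $\ge 2$ in a single generator to vanish so that no one-element set can generate $\A$ when $d\ge 2$, conclude $m_1=m_2=1$, and telescope to get $m_d\le d-1$. The only difference is that you spell out the induction showing all longer words in one element vanish, which the paper leaves implicit.
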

\begin{proof}
	Follows directly from Lemma \ref{lem_smallstep}  and the fact that $\A$ cannot be $1$-generated. Indeed,   $a^2=0$ for any $a\in \A$. Thus 1-generated algebra can not be 2-dimensional. Now for a generating set $\SS$ of $\A$ with $l(\SS)=l(\A)$ and characteristic sequence $(m_1,\ldots, m_d)$ we have $m_1=m_2=1$ and $l(\SS) =m_d \le m_{d-1} +1 \le \ldots \le m_2 + (d-2) = d-1$.
\end{proof}

This bound is sharp as well.

\begin{example}
	Consider so-called filiform Lie algebra $\A_d$ with basis $x_1,\ldots,x_d$, $d \ge 3$ and the following multiplication law:
	
	$$x_1 x_i = x_{i+1} = - x_i x_1, \ i=2,\ldots, d-1,$$
	
	with other products being zero. Since $l(\A_d) \le d-1$ and $l(\A_d) \ge l(\{x_1,x_2\}) = d-1$, we have $l(\A_d) = d-1$.
\end{example}

Lie algebras arise from associative algebras by changing the product $x \cdot y$ into $[x,y] = x  \cdot y - y   \cdot x$, and the following statement provides the connections between these two related algebras.

\begin{proposition}
	Let $\A$ be an associative algebra over a field $\F$ with the multiplication $\cdot$: $\A \times \A \rightarrow \A$, and $\A^{(-)}$ be ts adjoint Lie algebra, i.e.,   $\A^{(-)}=(\A, [,])$, where  $[x,y] = x \cdot y - y \cdot x$ for any $x,y \in \A$. Then  any generating set $\SS$ of $\A^{(-)}$   is a generating set of $\A$   and  $l(\SS) \le l^{Lie}(\SS) $, where $l^{Lie}(\SS)$ is the length of $\SS$ in   $\A^{(-)}$ and $l(\SS)$ is its length in~$\A$. 
\end{proposition}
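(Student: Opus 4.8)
The plan is to show two things: first, that any Lie-generating set $\SS$ for $\A^{(-)}$ already generates $\A$ as an associative algebra, and second, that the associative length of $\SS$ does not exceed its Lie length. The key observation for both is that every Lie word in $\SS$ — that is, every iterated commutator of elements of $\SS$ — can be rewritten, by expanding the brackets $[u,v] = u\cdot v - v\cdot u$, as an $\F$-linear combination of associative words in $\SS$ of the \emph{same} length. I would prove this by induction on the length of the Lie word $w$: for $l(w)=1$ it is immediate, and for $l(w)>1$ write $w = [w', w'']$ with $l(w')+l(w'') = l(w)$; by induction $w'$ and $w''$ are linear combinations of associative words of lengths $l(w')$ and $l(w'')$ respectively, so $w'\cdot w''$ and $w''\cdot w'$ are linear combinations of associative words of length $l(w)$, and hence so is $w = w'\cdot w'' - w''\cdot w'$.

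From this it follows at once that $\L_i^{Lie}(\SS) \subseteq \L_i(\SS)$ for every $i \ge 1$, where $\L_i^{Lie}$ denotes the linear span in $\A^{(-)}$ of Lie words of length at most $i$ and $\L_i$ is the associative analogue in $\A$. Indeed, the first displayed containment holds because each spanning Lie word of $\L_i^{Lie}(\SS)$ lies in $\L_i(\SS)$ by the claim above. Taking the union over all $i$, and using that $\SS$ generates $\A^{(-)}$ (so $\bigcup_i \L_i^{Lie}(\SS) = \A$ as a vector space, which coincides with $\A$ as a set), we get $\A = \bigcup_i \L_i^{Lie}(\SS) \subseteq \bigcup_i \L_i(\SS) \subseteq \A$, so $\SS$ generates $\A$ as an associative algebra.

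Finally, to compare the lengths: set $m = l^{Lie}(\SS)$, so that $\L_m^{Lie}(\SS) = \A$. Then $\A = \L_m^{Lie}(\SS) \subseteq \L_m(\SS) \subseteq \A$, hence $\L_m(\SS) = \A$, and by Definition \ref{sys_len} we conclude $l(\SS) \le m = l^{Lie}(\SS)$, as required.

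I do not expect a serious obstacle here; the only point requiring care is the inductive rewriting of Lie words as length-preserving linear combinations of associative words, and in particular making sure the bracket decomposition $w = [w',w'']$ respects lengths additively — which it does by the definition of the length of a word as the number of letters, since forming a commutator of $w'$ and $w''$ produces words using exactly the letters of $w'$ together with those of $w''$. One should also note in passing that the inclusion $\L_i^{Lie}(\SS)\subseteq \L_i(\SS)$ may be strict, which is precisely why only the inequality $l(\SS)\le l^{Lie}(\SS)$, and not equality, is claimed.
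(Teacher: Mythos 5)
Your proposal is correct and follows essentially the same route as the paper: both establish the inclusion $\L^{Lie}_n(\SS)\subseteq \L_n(\SS)$ by induction (the paper inducts on the spans $\L^{Lie}_n$ directly, you induct on individual Lie words, which amounts to the same thing) and then deduce both the generation claim and the inequality $l(\SS)\le l^{Lie}(\SS)$ exactly as you do.
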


\begin{proof}
	Both statements follow from the fact that for set $\SS \subset \A$ we have  $\L^{Lie}_n (\SS) \subset \L _n(\SS)$. Here $\L^{Lie}_n (\SS)$ is a linear span of all words of length less than or equal to $n$ in   $\SS$ with respect to the product $[ , ]$. We prove this fact by induction.
	
	The base.   For $n=1$ we have $\L^{Lie}_1 (\SS) = \L _1(\SS)$ as the set of all linear combinations of elements from $\SS$.
	
	The step. Assume that the statement holds for $n=1,\ldots,N-1$. For $n=N$ we have $\L^{Lie}_N (\SS) = \bigcup\limits_{i\in \{1,\ldots, N-1\} }  [\L^{Lie}_{i} (\SS), \L^{Lie}_{N-i} (\SS)]$. Applying induction hypothesis to each component  we have  $\L^{Lie}_N (\SS) \subseteq \bigcup\limits_{i\in \{1,\ldots, N-1\} }  [\L_{i} (\SS), \L_{N-i} (\SS)]$. Then using that $ [\L_{i} (\SS), \L_{N-i} (\SS)]
	\subseteq L_{N}(\SS)$, we have the desired inclusion.
	
	 Since $\SS$ is a generating set of $A^{(-)}$, there exists $n_0$ such that  $\A = \L^{Lie}_{n_0}(\SS) \subset \L_{n_0}(\SS) \subset\A$. Thus,  $\L_{n_0}(\SS) = \A$ and  $l^{Lie}(\SS) \ge l(\SS)$.
\end{proof}

Note that the above proposition does not mean that the length of $\A^{(-)}$ is  greater than or equal to the length of  $\A$. Actually, any mutual behavior of these numerical invariants is possible, as the following examples show.

\begin{example}
 	Consider the algebra  $\A_1=\R^2$ over $\R$ with the addition and the multiplication defined coordinate-wise. Then $\A_1$ is  a unital algebra of the dimension~2. Hence, $l(\A_1)=1$. Also $l^{Lie}(\A^{(-)}_1)= 1$ since a product of any two elements in $\A^{(-)}_1$ equals~$0$.
\end{example}

\begin{example}
	Consider $\A_2=\R^3$ over $\R$ with coordinate-wise addition and multiplication. Then $\A_2$ is a unital algebra of the dimension 3. Hence $l(\A_2) \le 2$. Since  $l(\{(0,1,2)\}) =2$ in $\A_2$ it follows that $l(\A_2)=2$. Meanwhile,   $l(\A_2^{(-)})=1$ since any product in $\A_2^{(-)} $ is equal to~0.
\end{example}

\begin{example}
Consider  $\A_3=M_2(\R)$. Then $l(\A_3)=2$, see, for example, \cite{Paz84}. Let us prove that $l^{Lie}(\A_3^{(-)})=3$. To do this we consider the set $\{  G_1, G_2  \}$, where $G_1 = E_{11} - E_{12}$ and $G_2 = E_{21} + E_{22}$, here $E_{ij}$ is the matrix with 1 in (i,j)-th position and 0 elsewhere. Then we have
$$[G_1, G_2] = (E_{11} - E_{12})(E_{21} + E_{22} ) - (E_{21} + E_{22})(E_{11} - E_{12})=$$ $$= -E_{11}-E_{12} - E_{21} + E_{22} = : G_3,$$
$$[G_3,G_1] =  ( -E_{11}-E_{12} - E_{21} + E_{22})(E_{11} - E_{12})  -  (E_{11} - E_{12})( -E_{11}-E_{12} - E_{21} + E_{22}) =$$ $$ = -E_{11} +3 E_{12} - E_{21} +E_{22} =:G_4 .$$

As $[G_1,G_1]=[G_2,G_2]=0$, $[G_2, G_1] = - [G_1, G_2]$ and $G_1, G_2, G_3$ and $G_4$ are linearly independent, the set $\{  G_1, G_2  \}$ is a generating system of length 3. As $l(\A_3^{(-)}) \le 3 $ by Proposition \ref{prop_lie}, we have  $l(\A_3^{(-)}) = 3$.
\end{example}

\subsection{Novikov algebras}

Another well-known class of non-associative algebras is  the class  of Novikov algebras. Their properties can be found, for example, in~\cite{BdG}.

\begin{definition}\label{def_nov}
	An algebra $\A$ is called a {\em Novikov algebra} if 
	
	1.   $x  (y  z) - (x  y)  z = y  (x  z) - (y  x) z$ for all $ x,y,z \in \A$,
	
	2.  $(x y) z= (x z)  y$ for all	$ x,y,z \in \A$.
\end{definition}

\begin{proposition}
Novikov algebras are mixing and hence they have slowly growing length.
\end{proposition}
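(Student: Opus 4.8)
The plan is to verify Definition~\ref{def_1} directly, that is, to show that for all $x,y,z$ in a Novikov algebra $\A$ both products $(xy)z$ and $z(xy)$ lie in $\langle P(x,y,z)\rangle$ (adding $1$ in the unital case, although Novikov algebras of interest here are non-unital). Since $P(x,y,z)=Q_l(x,y,z)\cup Q_r(x,y,z)$, it suffices to place $(xy)z$ inside $\langle Q_l(x,y,z)\rangle$ and $z(xy)$ inside $\langle Q_r(x,y,z)\rangle$; either one already shows mixing once the other is handled, but both are needed for Definition~\ref{def_1}.

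First I would handle $(xy)z$. By right-commutativity (Item~2 of Definition~\ref{def_nov}), $(xy)z=(xz)y$, and the monomial $(xz)y$ is literally an element of $Q_l(x,y,z)$ (and of $P(x,y,z)$). So $(xy)z\in\langle P(x,y,z)\rangle$ trivially. Next I would handle $z(xy)$, which is the substantive part. Here I apply the left-symmetry identity (Item~1 of Definition~\ref{def_nov}) with the variables permuted so that the term $z(xy)$ appears: writing the identity $a(bc)-(ab)c=b(ac)-(ba)c$ with $a=z$, $b=x$, $c=y$ gives
\[
z(xy)=(zx)y+x(zy)-(xz)y.
\]
Now $(zx)y$, $x(zy)$, and $(xz)y$ are all degree-3 monomials in which $z$ sits inside the inner bracket, hence all three belong to $P(x,y,z)$ (indeed $(zx)y,(xz)y\in Q_l$ and $x(zy)\in Q_r$). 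Therefore $z(xy)\in\langle P(x,y,z)\rangle$, which completes the verification that $\A$ is mixing. The slow growth of length then follows immediately from Theorem~\ref{th_sg}.

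The only thing requiring care is the bookkeeping of which permuted instance of the left-symmetry identity produces $z(xy)$ with the other three terms landing in $P(x,y,z)$; once the right substitution is chosen this is a one-line computation, so I do not expect a genuine obstacle. (One could equally observe that these identities, being multilinear, need only be checked on basis elements by Lemma~\ref{lem_distr}, but since the membership $z(xy)\in\langle P(x,y,z)\rangle$ is already an identity in $x,y,z$ this is not needed here.) A small point worth stating explicitly in the write-up: $\langle P(x,y,z)\rangle$ denotes the linear span for the \emph{fixed} triple $x,y,z$, and the displayed identity expresses $z(xy)$ as an $\F$-linear combination of elements of that span, which is exactly what Definition~\ref{def_1} demands.
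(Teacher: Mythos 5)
Your proof is correct and is exactly the verification the paper leaves implicit (its proof just says the properties are "evident from the definition"): right-commutativity gives $(xy)z=(xz)y\in P(x,y,z)$, and the permuted left-symmetry identity gives $z(xy)=(zx)y+x(zy)-(xz)y$, a linear combination of elements of $P(x,y,z)$. One trivial slip in your parenthetical: $(zx)y$ and $(xz)y$ are the degree-3 monomials of $Q_r$, while $x(zy)$ lies in $Q_l$, not the other way around — harmless here since only membership in $P=Q_l\cup Q_r$ is needed for the mixing property.
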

\begin{proof}
	Required properties of Definition \ref{def_1} are evident from definition of the class.
\end{proof}

The following example shows that this bound is sharp.

\begin{example}
	Consider algebra $C_d$ with basis $x_1,\ldots,x_d$, $d \ge 3$ and the following multiplication law:
		$$x_1   x_i = x_{i+1}, \ i=1,\ldots ,d-1,$$
	with other products being zero. Since $l(C_d) \le d$ and $l(C_d) \ge l(\{x_1\}) = d$, we have $l(C_d) = d$.
	
	$C_d$ is indeed a Novikov algebra.  To prove this, consider elements $u,y,z \in C_d$ and their representations via basis above. Let $c_u$ and  $c_y$ be coefficients of $u$ and $y$ at $x_1$.

	Coefficient at $x_1$ of $u y$ and $u z$ are zero, which means $(u   y)  z= (u   z)  y = (y u)  z = 0$, and the second property of Definition \ref{def_nov} holds, while the first is reduced to $u (y  z) = y (uz)$. For the latter we have  $u (y  z) = u  ((c_y x_1)  z) = c_y u  (x_1  z) = c_u c_y x_1  (x_1  z) = c_u y  (x_1  z) = y ((c_u x_1)  z) =  y (u  z)$.

\end{example}

Another example demonstrates that Novikov algebras are not necessarily sliding.

\begin{example}

	Consider algebra $C$ over field $\F$ with basis $x_1,x_2,x_3,x_4$ and the following multiplication law:
	$$x_1 x_1 = x_2,\  x_1 x_2 = x_3, \ x_2  x_1 = x_4,$$
	with other products being zero.
	
	$C$ is ia Novikov algebra. To prove this we will check the properties of Definition \ref{def_nov} on basis elements and infer it for other elements by Lemma \ref{lem_distr}. 

	For a triple $u,y,z \in \{x_1,x_2,x_3,x_4\}$ any product of the elements $u,y,z$ in any order is zero if at least one of them is not equal to $x_1$, which means that both properties hold in this case. If $u=y=z = x_1$, they also trivially hold.
	
	However, $C$ is not sliding as $(x_1  x_1) x_1 = x_4$ cannot be represented as a linear combination of  elements of $Q_l(x_1,x_1,x_1)=\{x_1  (x_1 x_1),\ x_1 x_1, x_1 \} = \{x_3, x_2 ,x_1\} $ and vice versa $x_1 (x_1  x_1) = x_3$ cannot be represented as a linear combination of elements of $Q_r(x_1,x_1,x_1)=\{(x_1  x_1)  x_1,\ x_1 x_1, x_1 \} = \{x_4, x_2 ,x_1\} $.
\end{example}

\subsection{Zinbiel algebras}

We also consider Zinbiel algebras, for further information on which we direct the reader to \cite{AKO}.

\begin{definition}
	An algebra $\A$  is called a {\em (right)-Zinbiel algebra} if $x (y  z) =(x  y + y x)  z$ for all $x,y,z \in \A$.
\end{definition}

\begin{proposition}
Zinbiel algebras are sliding and hence they have slowly growing length.
\end{proposition}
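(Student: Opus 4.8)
The plan is to verify that the defining identity of a Zinbiel algebra puts the product $x(yz)$ into the span $\langle Q_r(x,y,z)\rangle$, i.e. to check the first property in Definition~\ref{def_2}. Recall that the Zinbiel identity reads $x(y z) = (xy + yx) z = (xy)z + (yx)z$. So for arbitrary $x,y,z \in \A$ we have
\begin{equation*}
x(y z) = (xy)z + (yx)z.
\end{equation*}
Now I would rename the variables to match the pattern of Definition~\ref{def_2}, Item~1, which concerns the element $z(xy)$: applying the identity with the roles of the letters permuted, $z(xy) = (zx)y + (xz)y$. Both monomials $(zx)y$ and $(xz)y$ appear literally in
\begin{equation*}
Q_r(x,y,z) = \{ (x z) y,\, (z x) y,\, (y z) x,\, (z y) x,\, xy,\, yx,\, xz,\, zx,\, yz,\, zy,\, x,\, y,\, z \},
\end{equation*}
so $z(xy) \in \langle Q_r(x,y,z)\rangle$, and in the unital case trivially $z(xy) \in \langle Q_r(x,y,z), 1\rangle$ as well. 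This establishes that every Zinbiel algebra satisfies Item~1 of Definition~\ref{def_2}, hence is sliding. The conclusion that Zinbiel algebras have slowly growing length then follows immediately from Theorem~\ref{th_sg} (for $\dim\A \ge 2$; the one- and zero-dimensional cases are trivial since there the length is at most $\dim\A$ anyway, or one simply notes $l(\A) \le \dim \A$ directly).

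The only subtlety, and the thing I would be careful about, is the bookkeeping of which associativity-bracketing and which variable ordering the set $Q_r$ actually records. By its description $Q_r(x,y,z)$ collects the degree-three monomials in which $z$ sits inside the first (inner) multiplication and the factor containing $z$ is the \emph{first} factor of the outer multiplication; the monomial $z(xy)$ we must expand has $z$ as the \emph{outer} left factor and $xy$ inside. Expanding $z(xy)$ by the Zinbiel law gives $(zx)y + (xz)y$, and in each of these $z$ indeed occurs in the inner product and in the left (first) factor of the outer product, so they genuinely lie in $Q_r$ — this is the content of the identity being "right-Zinbiel", i.e. the outer argument gets distributed onto the left. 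I expect no real obstacle here beyond making sure this matching is correct; the proof is essentially a one-line identity plus an invocation of Theorem~\ref{th_sg}.

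Note that one could alternatively try to verify Item~2 of Definition~\ref{def_2} (expressing $(xy)z$ via $Q_l$), but the Zinbiel identity does not directly rewrite a product whose outer \emph{right} factor is a single letter; it rewrites products of the shape $\ast(\ast\ast)$. So Item~1 is the natural one to use, and that is the route I would take.
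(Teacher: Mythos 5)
Your proof is correct and takes essentially the same approach as the paper, which simply states that the required property of Definition~\ref{def_2} is evident from the Zinbiel identity; your computation $z(xy)=(zx)y+(xz)y$ with both monomials lying in $Q_r(x,y,z)$ is exactly the verification of Item~1 that the paper leaves implicit, and the appeal to Theorem~\ref{th_sg} finishes the argument.
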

\begin{proof}
	Required properties of Definition \ref{def_2} are evident from definition of the class.
\end{proof}

The following example shows that the above bound is sharp.

\begin{example}
	
	Consider the algebra $\mathcal{Z}_d$ over the field $\R$ with the basis $x_1,\ldots,x_d$, $d \ge 3$ and the following multiplication law:
		$$x_i x_j = \frac{j}{i+j} x_{i+j}, \ i,j=1,\ldots, d, \ i+j\le d,$$
	with other products being zero. Since $l(\mathcal{Z}_d) \le d$ and $l(\mathcal{Z}_d) \ge l(\{x_1\}) = d$, we have $l(\mathcal{Z}_d) = d$.
	
	$\mathcal{Z}_d$ is indeed a Zinbiel algebra. To prove this we will demonstrate its defining property on basis elements and infer it for other elements by Lemma \ref{lem_distr}. 

	We have for $i,j,k$ such that $i + j+k \le d$ $$x_i (x_j x_k) =  \frac{k}{j+k} x_i x_{j+k} = \frac{k}{j+k+i} x_{i+j+k} = (x_i x_j + x_j x_i ) x_k ,$$
	
	and for  $i,j,k$ such that  $i + j+k > d$  $$x_i (x_j x_k) = 0 = (x_i x_j + x_j x_i ) x_k.$$
	
\end{example}

Now we demonstrate that Zinbiel algebras are not necessarily mixing.

\begin{example}
	Consider algebra $\mathcal{Z}$ over field $\F$ with basis $x_1,x_2,x_3,x_4,x_5$ and the following multiplication law:
		$$x_1 x_2 = x_4 = - x_2 x_1, \ x_4 x_3 = x_5,$$
	with other products being zero.

	$\mathcal{Z}_d$ is a Zinbiel algebra. To prove this we will demonstrate its defining property on basis elements and infer it for other elements by Lemma \ref{lem_distr}. 

	For a triple $u,y,z \in \{x_1,x_2,x_3,x_4,x_5\}$ any product of $u,y,z$ in any order is zero if at least one of them is equal to $x_4,x_5$, which means that the property holds in this case. If $\{u,y,z\} \subsetneq \{x_1,x_2,x_3\}$, the possible products are zero as well. For the remaining possibilities see the table below (the last column checking $u (y  z) =(u  y + y u)  z$).

	\begin{tabular}{|c|c|c|c|}
		\hline
		$u$ & $y$ & $z$ & Result \\
		\hline
		$x_1$ & $x_2$ & $x_3$ & $0  = (x_4 - x_4) x_3$ \\
		\hline
		$x_2$ & $x_1$ & $x_3$ & $0  = (-x_4 + x_4) x_3$ \\
		\hline
		$x_1$ & $x_3$ & $x_2$ & $0 = (0+0)x_2$ \\
		\hline
		$x_3$ & $x_1$ & $x_2$ & $0 = (0+0)x_2$ \\
		\hline
		$x_2$ & $x_3$ & $x_1$ & $0 = (0+0)x_1$ \\
		\hline
		$x_3$ & $x_2$ & $x_1$ & $ 0 = (0+0)x_1$ \\
		\hline

	\end{tabular}

	However, the algebra is not mixing as $(x_1 x_2) x_3 = x_5$ cannot be represented as a linear combination of elements of $P(x_1,x_2,x_3) =$  $$=\left \{ \begin{matrix} (x_1   x_3)   x_2, (x_3   x_1)   x_2, (x_2 x_3)   x_1, (x_3   x_2)   x_1, \\ x_1   (x_3   x_2), x_1   (x_2   x_3), x_2   (x_1   x_3), x_2   (x_3   x_1),\\ x_1  x_2, x_2 x_1, x_1  x_3, x_3 x_1, x_2  x_3, x_3  x_2,x_1 ,x_2 , x_3 \end{matrix} \right \}=$$ $=\{0,x_1,x_2,x_3,x_4\}$.
\end{example}

\subsection{Some classes of algebras that do not have slowly growing length}

A class   of algebras closely connected with Novikov algebras are Vinberg algebras, also known as right-symmetric algebras (RSA), which are the algebras satisfying just the first one of the two conditions determining Novikov algebras, i.e.
\begin{definition}
	An algebra $\A$ is called a {\em Vinberg algebra} if $(x y) z - x  (y z) = (x  z)  y - x  (z  y)$ for all	$ x,y,z \in \A$.
\end{definition}
An overview of such algebras can be found in~\cite{Burde,Car,Dzhum}.

It can be shown that Vinberg  algebras, which are in general neither mixing nor sliding, do not have slowly growing length universally. Below we present an example of such algebra.

\begin{example}
	Consider a non-unitary algebra $\mathcal{R}$ with basis $e_1, e_2, e_3,e_4$ and the following multiplication table (the operation being concatenation):
	$$e_1 e_1 = e_2, \ e_1 e_2 = e_3, \ e_3 e_2 = e_4.$$
	with the other products being zero. The characteristic sequence of the set $\{e_1\}$ is $1,2,3,5$, while $\mathcal{R}$ belongs to the class of Vinberg algebras. To prove the latter, by Lemma \ref{lem_distr} it is enough to check that for $x,y,z \in \{e_1,e_2,e_3,e_4\}$ it holds that
	$$ (x  y) z - x  (y z) = (x  z)  y - x  (z y).$$

	If either of $x,y,z$ is $e_4$, then every term is obviously zero. After substitution of $e_i$ every term has the same length as words in $\{e_1\}$. This length is greater or equal to 3 (as there are three sub-terms of positive length).

 	For words of lengths $3$ and $5$ consider the table below.
	
	\begin{tabular}{|c|c|c|c|}
		\hline
		$x$ & $y$ & $z$ & Result \\
		\hline
		$e_1$ & $e_1$ & $e_1$ & $0 -e_3 = 0-e_3$ \\
		\hline
		$e_1$ & $e_1$ & $e_3$ & $0 -0 = 0-0$ \\
		\hline
		$e_1$ & $e_3$ & $e_1$ & $0 - 0 = 0-0$ \\
		\hline
		$e_3$ & $e_1$ & $e_1$ & $0 -e_4 = 0-e_4$ \\
		\hline
		$e_2$ & $e_2$ & $e_1$ & $0 -0 = 0-0$ \\
		\hline
		$e_2$ & $e_1$ & $e_2$ & $0 -0 = 0-0$ \\
		\hline
		$e_1$ & $e_2$ & $e_2$ & $e_4 -0 = e_4 - 0$ \\
		\hline
	\end{tabular}

	 Words of length $4$ or $6$ and higher are equal to zero, which means that the desired property holds trivially, and the algebra under consideration is a Vinberg algebra.
\end{example}

\begin{definition}
	An algebra $\A$ is called  a {\em Valya algebra} if 
	
	1.  $xy=-yx$  for all $x,y \in \A$,
	
	2.  $J(x_1 x_2,x_3 x_4, x_5 x_6)=0$,  where $J(x,y,z) = (xy)z+(yz)x+(zx)y$ for all $x_1,x_2,\ldots, x_6 \in \A$
	
\end{definition}

An overview of Valya algebras can be found in~\cite{Tar}.

Universally Valya algebras are neither mixing nor sliding, and they do not necessarily have slow growing length. 

\begin{example}
	Consider algebra $\mathbb{V}$ over a field $\F$ with basis $e_1,e_2,e_3,e_4,e_5,e_6$ and the following multiplication laws:
	
	$$e_1 e_2 = e_3 = - e_2 e_1,\ e_2 e_3 = e_4 = - e_3 e_2,$$
	$$e_3 e_4 = e_5 = -e_4 e_3, \ e_4 e_5 = e_6 = - e_5 e_4,$$
	
	with other products being zero. 

It is a Valya algebra: for the first property multiplication is clearly anti-commutative and for the second it is enough to check it on any six basis elements $e_{i_1}, e_{i_2},e_{i_3}, e_{i_4},e_{i_5}, e_{i_6}$ by Lemma \ref{lem_distr}.

$J(e_{i_1} e_{i_2},e_{i_3} e_{i_4},e_{i_5} e_{i_6}) $ is a sum of three words of similar length in letters $\{e_1,e_2\}$, and this length is at least 6 as each $e_{i_j}$ is a word of positive length.

If this length is other than 8, then every summand is zero as there are no non-zero words in this alphabet of such length.

Otherwise consider the summands in $J(e_{i_1},\ldots,e_{i_6})$. They are represented as products of three words of length at least 2 in $\{e_1,e_2\}$. A non-zero word of length 8, equal to $\pm e_6$ can be represented this was only as a product of $\pm e_3, \pm e_4, \pm e_4$ in correct order (one of  $\pm e_4$ being in the outer product).

However, $J(e_3,e_4,e_4)= 0$. Since $J$ is linear and symmetric by its arguments, all other combinations of $\pm e_3, \pm e_4, \pm e_4$ as arguments of $J$ will result in 0 as well.

The generating set $\{e_1,e_2\}$ has characteristic sequence $(1,1,2,3,5,8)$. It follows that $l(\A)=8 > 6 = \dim \mathbb{V}$.
\end{example}

\bigskip

In the previous sections we discussed classes of algebras with slowly growing lengths. We remark that there are many  algebras such that their length is not bounded by the dimension but is bounded by a certain linear function of the  dimension. Below we   present a certain family of such algebras.
	
\begin{proposition} \label{r-ended}
Let $r\ge 2$ be an integer and $\A_r$ be an algebra satisfying~the property: for all $ x,y_1,\ldots,y_r \in \A_r$ and any product $v=y_1\cdots y_r$ (with any placement of parentheses)   the equality $x v = 0$ holds. Then   $l(\A_r) \le (r-1) \dim \A_r$.
\end{proposition}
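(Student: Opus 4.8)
The idea is to bound the growth of the characteristic sequence of an arbitrary generating set $\SS$ of $\A_r$, this time allowing jumps of size up to $r-1$ rather than $1$. Concretely, I would show that for any generating set $\SS$ with characteristic sequence $M = (m_1,\ldots,m_d)$, $d = \dim \A_r$, one has $m_{j+1} - m_j \le r-1$ for all $j$, together with $m_1 \le 1$. Granting this, the telescoping estimate used in Theorem~\ref{th_sg} yields $l(\SS) = m_d \le m_1 + (r-1)(d-1) \le 1 + (r-1)(d-1) \le (r-1)d$, and taking the maximum over generating sets gives $l(\A_r) \le (r-1)\dim\A_r$.

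\textbf{Key step: irreducible words factor with a short left part.} The heart of the argument is an analogue of Item~1 in the proof of Lemma~\ref{lem_smallstep}: I claim every irreducible word $w$ with $l(w) \ge 2$ can be written as $w = w' \cdot w''$ with $w', w''$ irreducible of nonzero length and $l(w'') \le r-1$. By Lemma~\ref{lem_1} we have some factorization $w = w'\cdot w''$ into irreducible words; I would argue that if $l(w'') \ge r$, then $w''$ itself decomposes (again via Lemma~\ref{lem_1}, inductively) so that $w'' = v$ is a product of $r$ irreducible subwords $y_1\cdots y_r$ with some parenthesization, whence $w = w' \cdot (y_1\cdots y_r)$; but the defining property of $\A_r$ forces $x v = 0$ for every $x$, so $w = 0$ as an element of $\A_r$, contradicting irreducibility (an irreducible word is nonzero, since $0 \in L_0$ when $\A$ is unital and $0\in L_1$ always, and $l(w)>1$). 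The one subtlety is to make ``$w''$ is a product of $r$ irreducible pieces'' precise: since $l(w'') \ge r$, repeatedly splitting the longest current factor of length $\ge 2$ via Lemma~\ref{lem_1} produces arbitrarily many irreducible factors, in particular at least $r$ of them, and regrouping gives $w'' = y_1 \cdots y_r$ with the $y_i$ irreducible words of positive length and some bracketing — exactly the hypothesis of the property defining $\A_r$, read with $x = w'$.

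\textbf{From the factorization to the sequence bound.} Suppose for contradiction that $m_{k+1} - m_k \ge r$ for some smallest such $k$. By Corollary~\ref{cor_core}, Item~1, there is an irreducible word $w$ of length $m_{k+1}$; since $m_{k+1} \ge m_k + r \ge 1 + r \ge 2$ (using $m_1\ge 1$ in the non-unital case, or $m_1=0$, $s_1\ge 1$ in the unital case, so $m_{k+1}\ge 2$ once $k\ge 1$), the previous step gives $w = w'\cdot w''$ with $w', w''$ irreducible and $1 \le l(w'') \le r-1$, hence $l(w') = m_{k+1} - l(w'') \in [m_{k+1}-(r-1),\, m_{k+1}-1]$. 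By Corollary~\ref{cor_core}, Item~2, the integer $l(w')$ appears in the characteristic sequence $M$. But $m_k < m_{k+1} - (r-1) \le l(w') \le m_{k+1}-1 < m_{k+1}$, so $l(w')$ is a term of $M$ strictly between the consecutive values $m_k$ and $m_{k+1}$ — impossible since $M$ is monotone non-decreasing with no terms strictly between $m_k$ and $m_{k+1}$. This contradiction establishes $m_{j+1}-m_j \le r-1$ for all $j$, and the telescoping bound above completes the proof.

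\textbf{Expected main obstacle.} The routine part is the telescoping estimate; the delicate part is justifying rigorously that an irreducible word admits a factorization whose \emph{right} factor has length at most $r-1$, i.e. turning ``$w''$ too long'' into ``$w''$ is an $r$-fold product to which the annihilation identity applies.'' One must be careful that the property of $\A_r$ quantifies over \emph{all} parenthesizations of $y_1\cdots y_r$, which is exactly what an iterated application of Lemma~\ref{lem_1} delivers, so no generality is lost; and one must note that irreducible words are nonzero, so deriving $w = 0$ genuinely contradicts irreducibility. (Note also the bound is not claimed sharp here, so the slightly lossy estimate $m_1 \le 1$ and $1 + (r-1)(d-1) \le (r-1)d$ suffices.)
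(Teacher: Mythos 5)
Your proposal is correct and follows essentially the same route as the paper: both bound the jumps of the characteristic sequence by $r-1$ via Lemma~\ref{lem_1} and Corollary~\ref{cor_core}, using the annihilation identity (applied to a right factor of length $\ge r$ regrouped as an $r$-fold product) to force a contradiction with irreducibility, then telescope. The only cosmetic difference is that you argue directly with the right factor $l(w'')$ where the paper cases on $\min(l(w'),l(w''))$; your treatment of why a word of length $\ge r$ is an $r$-fold product is in fact more explicit than the paper's.
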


\begin{proof}
Consider a generating set $\SS$ of $\A_r$ such that $l(\SS) = l(\A_r)$ and its characteristic sequence $M = (m_1, \ldots, m_d)$, where $d= \dim \A_r$. We 
are going to prove that $m_{j+1} - m_j \le r-1$ for all $1\le j\le d-1$ .

	Assume the contrary. Let $k$ be the smallest index such that $m_{k+1} - m_k \ge r$. 
	
	 Consider a word $w$ of length at least two. It can be uniquely represented as $w=w' \cdot w''$, where $w'$ and $w''$ have non-zero lengths. We denote $s(w) = \min(l(w'),l(w''))$.

Consider an irreducible word $w$ in $\SS$ of length $m_{k+1}$. There are two possibilities.

Case 1: $s(w) \le r-1$. Then $w$ is a product of irreducible words of length $s(w)$ and $m_{k+1}-s(w)$ by Lemma \ref{lem_1}. Hence by Corollary \ref{cor_core}, Item 2, there is an element  equal to $m_{k+1} -s(w)$ in the characteristic sequence $M$. This is impossible, since $M$ is non-decreasing and $m_k < m_{k+1}-s(w)$ by the assumption.

Case 2: $s(w) \ge r$. 
If $ s(w)=l(w'')$ then $l(w'') \ge r$. Hence $w = 0$ by the condition on the products of $(r+1)$ factors in $\A_r$.
Otherwise $s(w)=l(w')$. Then   $l(w'') \ge l(w') \ge r$ and again $w = 0$.
Both of these possibilities contradict the fact that $w$ is irreducible.

Thus, the initial assumption is incorrect and $m_{j+1} - m_j \le r-1$ for all $1\le j\le d-1$. This allows us to conclude that $l(\A_r)  = l(\SS) = m_d \le  m_{d-1} + (r-1) \le \ldots \le m_1 + (r-1) (d-1) < (r-1) d$.
\end{proof}

Let us note that if $r=2$ then  the algebras $\A_r$ are sliding, and therefore, the bound is sharp. If $r>2$ then the resulting bound is not sharp. However, for any $r$ there exist algebras which provide growth of length which is linear in dimension with coefficient~$r-1$.

\begin{example}
Consider an algebra $\mathcal{E}_d$ with the basis $x_1,\ldots,x_d$, $d \ge r \ge 2$ and the following multiplication law:
		$$x_j x_1 = x_{j+1}, \ j=1,\ldots,r-2,$$ 
		$$x_i x_{r-1} = x_{i+1}, \ i=r-1,\ldots, d-1,$$
	with other products being zero. We have $$l(\mathcal{E}_d) \ge l(\{x_1\}) = (r-1) d - (r-2)(r-1) .$$
	
Let us prove that $\mathcal{E}_d$ satisfies the conditions of Proposition~\ref{r-ended}. 

At first, we consider $x,y_1,\ldots,y_r \in \{x_1,\ldots, x_d\}$. A word $x v$,  where $v$ is a product of $y_1,\ldots,y_r$, is indeed zero as $v$ cannot be neither $x_1$ nor $x_{r-1}$. So, the required condition holds for the basis of $\mathcal{E}_d$. Then  by Lemma~\ref{lem_distr} it is satisfied for other elements as well. 
\end{example}

\end{document}